
\documentclass[preprint,12pt,times]{elsarticle}




\usepackage{amssymb}
\usepackage{amsmath}
\usepackage{amsthm}



\newtheorem{theorem}{Theorem}[section]
\newtheorem{corollary}[theorem]{Corollary}
\newtheorem{lemma}[theorem]{Lemma}
\newtheorem{proposition}[theorem]{Proposition}
\theoremstyle{definition}

\theoremstyle{remark}
\newtheorem{remark}[theorem]{Remark}

\numberwithin{equation}{section}
\usepackage[colorlinks, linkcolor=blue, anchorcolor=blue, citecolor=blue]{hyperref}
\usepackage{color}

\begin{document}

\begin{frontmatter}



\title{Difference of weighted composition operators between some spaces of analytic functions} 

\author{Jiaoye Du\fnref{label1}}
\ead{djy18632952319@163.com}

\author{Cezhong Tong\fnref{label1}}
\ead{ctong@hebut.edu.cn; cezhongtong@hotmail.com}


\author{Zicong Yang\fnref{label1}} 
\ead{zc25@hebut.edu.cn; zicongyang@126.com}

\affiliation[label1]{Department of Mathematics, Hebei University of Technology, Tianjin 300401, China}

\begin{abstract}
We first abtain a new and simpler proof of the main result in [IEOT, \textbf{93} (2021), 17], which characterized the bounded and compact differences of two weighted composition operators $C_{u,\varphi}-C_{v,\psi}$ acting between different Bergman spaces. More importantly, we get some characterizations for the difference of two weighted composition operators belonging to Schatten class. Futhermore, the compact difference of two weighted composition operators acting on Hardy-Hilbert spaces is also studied.
\end{abstract}

%

\begin{keyword}
Bergman space, Weighted composition operator, Difference, Schatten class.
\MSC[2020] 30H20; 47B33

\end{keyword}

\end{frontmatter}


\section{Introduction}

Let $\mathbb{D}$ be the open unit disk in the complex plane $\mathbb{C}$ and $H(\mathbb{D})$ be the space of all analytic functions on $\mathbb{D}$. For $\alpha>-1$ and $0<p<\infty$, the standard weighted Bergman space $A_{\alpha}^p(\mathbb{D})$ consists of all functions $f\in H(\mathbb{D})$ such that
\[\|f\|_{A_{\alpha}^p}=\left(\int_{\mathbb{D}}|f(z)|^p{\rm d}A_{\alpha}(z)\right)^{1/p}\]
is finite. Here ${\rm d}A_{\alpha}(z)=(\alpha+1)(1-|z|^2)^{\alpha}{\rm d}A(z)$ and ${\rm d}A=\frac{1}{\pi}{\rm d}x{\rm d}y$ is the normalized Lebesgue measure on $\mathbb{D}$. It is known that $A_{\alpha}^p(\mathbb{D})$ is a Banach space for $1\leq p<\infty$. In particular, $A_{\alpha}^2(\mathbb{D})$ is a Hilbert space with the following inner product
\[\langle f, g\rangle=\int_{\mathbb{D}}f(z)\overline{g(z)}{\rm d}A_{\alpha}(z),\quad f,g\in A_{\alpha}^2(\mathbb{D}).\]
When $0<p<1$, the space $A_{\alpha}^p(\mathbb{D})$ is a complete metric space under the translation invariant metric $(f,g)\mapsto \|f-g\|_{A_{\alpha}^p}^p$.

The Hardy space $H^p(\mathbb{D})$ is defined by 
\[H^p(\mathbb{D})=\left\{f\in H(\mathbb{D}): \|f\|_{H^p}^p=\sup_{0<r<1}\int_{0}^{2\pi}|f(re^{i\theta})|^p\frac{{\rm d}\theta}{2\pi}<\infty\right\}.\]
Hardy space can be viewed as the limiting space of $A_{\alpha}^p(\mathbb{D})$ as $\alpha\to -1^+$. And the well-known Littlewood-Paley identity asserts that the $H^2$-norm can be converted to an area integral:
\begin{equation}\label{equa1.1}
\|f\|_{H^2}^2\simeq |f(0)|^2+\int_{\mathbb{D}}|f'(z)|^2(1-|z|^2){\rm d}A(z).
\end{equation}

The theory of analytic function spaces and operators acting on them, especially weighted composition operators, has witnessed an explosive growth in the past several decades. Denoted by $S(\mathbb{D})$ the set of all analytic self-maps of $\mathbb{D}$. Given $u\in H(\mathbb{D})$ and $\varphi\in S(\mathbb{D})$, the weighted composition operator $C_{u,\varphi}$ on $H(\mathbb{D})$ is defined by
\[C_{u,\varphi}f=u\cdot f\circ\varphi.\]
It is known that weighted composition operators are closely related to the isometries on Hardy and Bergman spaces, see for example \cite{13,14}. When $u=1$, it reduces to the composition operator $C_{\varphi}$. One can refer to the standard references \cite{9} and \cite{23} for various aspects on the theory of composition operators and refer to \cite{10,11} for the study of weighted composition operators on weighted Bergman spaces.

In the course of research on the topological structure of the space of all composition operators, which was initiated by Berkson \cite{2} and then by Shapiro and Sundberg \cite{24} in more details, problems of characterizing the path components and compact differences of two composition operators attracted broad interest. On Bergman spaces, the effort to characterize the compact difference of two composition operators was initiated by Moorhouse \cite{19} on $A_{\alpha}^2(\mathbb{D})$ and then by Saukko \cite{21, 22} from $A_{\alpha}^p(\mathbb{D})$ to $A_{\alpha}^q(\mathbb{D})$. By using Joint-Carleson measures, Koo and Wang \cite{15} studied the bounded and compact difference $C_{\varphi}-C_{\psi}$ on $A_{\alpha}^p$ over the unit ball. For more results about the difference of composition operators on various settings, see \cite{7,8,26} and references therein. 

Acharyya and Wu \cite{1} first considered compact differences of two weighted composition operators $C_{u,\varphi}-C_{v,\psi}$ between weighted Bergman spaces with the weights $u,v$ satisfying a certain growth condition. Recently, Choe et al. \cite{4} obtained complete characterizations for bounded and compact differences $C_{u,\varphi}-C_{v,\psi}$ on $A_{\alpha}^p(\mathbb{D})$ under an extra condition: $u,v\in A_{\alpha}^p(\mathbb{D})$. And then, they extended their results to operators acting from $A_{\alpha}^p(\mathbb{D})$ to $A_{\alpha}^q(\mathbb{D})$ under a condition that $u,v\in A_{\alpha}^q(\mathbb{D})$, see \cite{5}.

We first state the results in \cite{5}. To this end, we introduce several notations. For $\varphi,\psi\in S(\mathbb{D})$, put
\[\rho(z)=d(\varphi(z),\psi(z)),\quad z\in\mathbb{D},\]
where $d$ denotes the pseudo-hyperbolic distance on $\mathbb{D}$, see Section 2.1. Given a positive Borel measure $\mu$ on $\mathbb{D}$ and $\varphi\in S(\mathbb{D})$, the pull-back measure $\mu\circ\varphi^{-1}$ is defined by 
\[(\mu\circ\varphi)^{-1}(E)=\mu(\varphi^{-1}(E))\]
for any Borel set $E\subset\mathbb{D}$. For $u,v\in H(\mathbb{D})$, $\varphi,\psi\in S(\mathbb{D})$ and $\beta>0$, we now define several pull-back measures as follows:
\begin{align*}
\omega_{\varphi,u}^q:=\left(|\rho u|^q{\rm d}A_{\alpha}\right)\circ\varphi^{-1},\quad \omega_{\psi,v}^q:=\left(|\rho v|^q{\rm d}A_{\alpha}\right)\circ\psi^{-1},
\end{align*}
and
\begin{align*}
\sigma_{\varphi,\beta}^q:=\left[(1-\rho)^{\beta}|u-v|^q{\rm d}A_{\alpha}\right]\circ\varphi^{-1},\quad \sigma_{\psi,\beta}^q:=\left[(1-\rho)^{\beta}|u-v|^q{\rm d}A_{\alpha}\right]\circ\psi^{-1}.
\end{align*}
For simplicity, put
\[\omega_q=\omega_{\varphi,u}^q+\omega_{\psi,v}^q\quad {\rm and }\quad \sigma_{\beta,q}=\sigma_{\varphi,\beta}^q+\sigma_{\psi,\beta}^q.\]
The results in \cite{5} are described in terms of $(p,q)$-Bergman Carleson measures, see Section 2.3 for details about this notion.

\begin{theorem}[\cite{5}]\label{theorem1.1}
Let $\alpha>-1$, $0<p\leq q<\infty$ and $\beta>\frac{q}{p}(\alpha+2)$. Suppose $u,v\in A_{\alpha}^q(\mathbb{D})$ and $\varphi,\psi\in S(\mathbb{D})$. Then the following statements are equivalent.
\begin{itemize}
\item[(i)] $C_{u,\varphi}-C_{v,\psi}: A_{\alpha}^p(\mathbb{D})\to A_{\alpha}^q(\mathbb{D})$ is bounded (compact, resp.).
\item[(ii)] $\omega_q$ and $\sigma_{\beta,q}$ are (vanishing, resp.) $(p,q)$-Bergman Carleson measures.  
\end{itemize}
\end{theorem}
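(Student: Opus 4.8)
The plan is to reduce every assertion to a size estimate for $\int_{\mathbb{D}}|u\,f\circ\varphi-v\,f\circ\psi|^q\,dA_{\alpha}$, organized according to the size of $\rho$. Two auxiliary facts carry most of the weight. The first is a pseudo-hyperbolic smoothing inequality: if $\rho(z)<\tfrac12$ then
\[
|f(\varphi(z))-f(\psi(z))|\lesssim\rho(z)\left(\frac{1}{(1-|\varphi(z)|^2)^{\alpha+2}}\int_{D(\varphi(z),\,7/8)}|f|^p\,dA_{\alpha}\right)^{1/p},
\]
which follows by composing with the relevant disc automorphism, a Cauchy estimate, the sub-mean-value property of $|f|^p$, and the comparison $1-|\varphi(z)|^2\simeq 1-|\psi(z)|^2$ valid on $\{\rho<\tfrac12\}$. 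The second is the disc characterization of $(p,q)$-Bergman Carleson measures recalled in Section~2.3: for $p\le q$, a positive measure $\mu$ is a (vanishing) Carleson measure if and only if $\mu(D(a,r))\lesssim(1-|a|^2)^{(\alpha+2)q/p}$ (respectively $=o((1-|a|^2)^{(\alpha+2)q/p})$ as $|a|\to1$). A routine consequence, obtained by covering $\mathbb{D}$ with a lattice of discs and using $q/p\ge1$, is that such a $\mu$ also controls local $L^p$-averages, i.e. $\int_{\mathbb{D}}\big(\frac{1}{(1-|w|^2)^{\alpha+2}}\int_{D(w,s)}|f|^p\,dA_{\alpha}\big)^{q/p}d\mu(w)\lesssim\|f\|_{A_{\alpha}^p}^q$.

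For (ii)$\Rightarrow$(i) I split $\mathbb{D}=\{\rho\ge\tfrac12\}\cup\{\rho<\tfrac12\}$. On $\{\rho\ge\tfrac12\}$ one has $|u\,f\circ\varphi-v\,f\circ\psi|^q\lesssim|\rho u|^q|f\circ\varphi|^q+|\rho v|^q|f\circ\psi|^q$, so a change of variables bounds the corresponding part of $\|(C_{u,\varphi}-C_{v,\psi})f\|_{A_{\alpha}^q}^q$ by $\int|f|^q\,d\omega_{\varphi,u}^{\alpha,q}+\int|f|^q\,d\omega_{\psi,v}^{\alpha,q}\lesssim\|f\|_{A_{\alpha}^p}^q$. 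On $\{\rho<\tfrac12\}$ I write $u\,f\circ\varphi-v\,f\circ\psi=u(f\circ\varphi-f\circ\psi)+(u-v)f\circ\psi$; since $(1-\rho)^{\beta}\simeq1$ there, the second term contributes at most $\int|f|^q\,d\sigma_{\psi,u,v}^{\alpha,q,\beta}\lesssim\|f\|_{A_{\alpha}^p}^q$, while the first term, after the smoothing inequality and a change of variables, is dominated by the local-average expression against $\omega_{\varphi,u}^{\alpha,q}$, hence by $\|f\|_{A_{\alpha}^p}^q$. Adding the three pieces gives boundedness; for compactness one feeds the little-oh versions of the four Carleson conditions into the same estimates, invoking the standard criterion that a bounded operator between these spaces (with $p\le q$) is compact precisely when it sends every norm-bounded sequence converging to $0$ uniformly on compacta to a norm-null sequence.

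For (i)$\Rightarrow$(ii) I begin with the constant function $1$: since $(C_{u,\varphi}-C_{v,\psi})1=u-v$, boundedness gives $u-v\in A_{\alpha}^q$, so that in this situation the hypothesis $u,v\in A_{\alpha}^q$ is equivalent to $u\in A_{\alpha}^q$. For the measures I test against the normalized kernels $k_a(z)=(1-|a|^2)^{(\alpha+2)/p}(1-\bar a z)^{-2(\alpha+2)/p}$, with $\|k_a\|_{A_{\alpha}^p}\simeq1$, $|k_a|\simeq(1-|a|^2)^{-(\alpha+2)/p}$ on $D(a,r)$, and $k_a\to0$ uniformly on compacta as $|a|\to1$; in the regime where $\varphi(z)$ and $\psi(z)$ both lie near $a$ yet at a non-negligible pseudo-hyperbolic distance, where a single kernel cannot separate them, I also use the companion functions $z\mapsto\frac{a-z}{1-\bar a z}\,k_a(z)$, which vanish at $a$ and hence register $\varphi(z)-\psi(z)$. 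Partitioning $\varphi^{-1}(D(a,r))$ according to which of $|u\,k_a\circ\varphi|$ and $|v\,k_a\circ\psi|$ dominates and using $\|(C_{u,\varphi}-C_{v,\psi})k_a\|_{A_{\alpha}^q}^q\lesssim1$, one bounds $\omega_{\varphi,u}^{\alpha,q}(D(a,r))$ and $\omega_{\psi,v}^{\alpha,q}(D(a,r))$; because the term $v\,k_a\circ\psi$ must be reabsorbed through the $\omega_{\psi,v}^{\alpha,q}$-estimate and symmetrically, the two are coupled and are handled together, e.g. by a bootstrap on $\sup_{b\in\mathbb{D}}(1-|b|^2)^{-(\alpha+2)q/p}\big(\omega_{\varphi,u}^{\alpha,q}(D(b,r))+\omega_{\psi,v}^{\alpha,q}(D(b,r))\big)$. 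Finally, for $\sigma_{\varphi,u,v}^{\alpha,q,\beta}$ I use $(u-v)k_a\circ\varphi=(C_{u,\varphi}-C_{v,\psi})k_a-v(k_a\circ\varphi-k_a\circ\psi)$: on $\{\rho<\tfrac12\}$ the right-hand side is controlled by $\|(C_{u,\varphi}-C_{v,\psi})k_a\|_{A_{\alpha}^q}^q\lesssim1$ together with the smoothing inequality and the local-average consequence of the now-established Carleson property of $\omega_{\psi,v}^{\alpha,q}$; on $\{\rho\ge\tfrac12\}$ one exploits the damping $(1-\rho)^{\beta}$, the identity $1-\rho(z)^2=\frac{(1-|\varphi(z)|^2)(1-|\psi(z)|^2)}{|1-\overline{\varphi(z)}\psi(z)|^2}$, and the hypothesis $\beta>\tfrac{q}{p}(\alpha+2)$, which is exactly what guarantees $|f(\varphi(z))|^q(1-|\varphi(z)|^2)^{\beta}\lesssim\|f\|_{A_{\alpha}^p}^q$, so the $\sigma$-condition is a genuine Carleson condition and not an over-strong one. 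The measure $\sigma_{\psi,u,v}^{\alpha,q,\beta}$ is symmetric, and the compact versions are obtained from $k_a\to0$ uniformly on compacta, with the bootstrap run in the little-oh category.

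The main obstacle is this necessity direction, and within it the intertwined verification of the two $\omega$-conditions and, downstream of them, the two $\sigma$-conditions: the summands $u\,f\circ\varphi$ and $v\,f\circ\psi$ do not separate, so no single pull-back measure's Carleson property can be read off before the operator is disentangled, and the argument must proceed in the right order ($u-v\in A_{\alpha}^q$, then the coupled $\omega$'s, then the $\sigma$'s) with test functions that simultaneously expose the three distinct obstructions measured by $\omega_{\varphi,u}^{\alpha,q}$, $\omega_{\psi,v}^{\alpha,q}$ and $\sigma_{\varphi,u,v}^{\alpha,q,\beta}$. The simplification over \cite{CbCkKhYj1} is expected to come from replacing several of its case-specific auxiliary lemmas by the single smoothing inequality and the lattice estimate for local averages above, and from streamlining this pseudo-hyperbolic bookkeeping, rather than from a new conceptual ingredient.
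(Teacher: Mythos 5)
Your sketch of (ii)$\Rightarrow$(i) is essentially sound (and that direction is anyway the one the paper quotes from \cite{CbCkKhYj1}); the problem is in the necessity direction, precisely at the step the paper is designed to simplify, namely the verification that $\omega_{\varphi,u}^{\alpha,q}$ and $\omega_{\psi,v}^{\alpha,q}$ are Carleson. Your plan --- partition $\varphi^{-1}(D(a,r))$ according to which of $|u\,k_a\circ\varphi|$, $|v\,k_a\circ\psi|$ dominates, then run ``a bootstrap'' on $\sup_{b}(1-|b|^2)^{-(\alpha+2)q/p}\bigl(\omega_{\varphi,u}^{\alpha,q}(D(b,r))+\omega_{\psi,v}^{\alpha,q}(D(b,r))\bigr)$ --- is not a proof as stated. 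On the set where the two terms are comparable in modulus the difference gives no lower bound at all, so the partition produces nothing there; on the set where $|v\,k_a\circ\psi|$ dominates you only control $\int |v|^q|k_a\circ\psi|^q$ over a piece of $\varphi^{-1}(D(a,r))$, which is a $\varphi$-pullback quantity, not $\omega_{\psi,v}^{\alpha,q}(D(b,r))$ for any $b$, so it cannot be ``reabsorbed'' into the bootstrap quantity; and an absorption argument needs both a priori finiteness of that supremum (which is exactly what is to be proved --- $u,v\in A_\alpha^q$ gives finiteness of $\omega(D(b,r))$ but not of the normalized sup as $|b|\to1$) and a small constant in front of the reabsorbed term, neither of which your outline supplies. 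The companion functions $\varphi_a k_a$ are the right objects, but saying they ``register $\varphi-\psi$'' is not enough; how they are combined with $k_a$ is the whole point, and it is never stated. Since the $\sigma$-estimates in your scheme are run downstream of the $\omega$-estimates, the gap propagates to them as well.

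The missing idea is an exact algebraic cancellation, and it is what the paper's proof of (i)$\Rightarrow$(ii) runs on: test with the two families $f_{a,N,p}^{[0]}$ and $f_{a,N,p}^{[1]}$ (kernel-type functions with and without the extra factor $z$), restrict to $\varphi^{-1}(D(a,r))$ where $|1-\overline a\varphi(z)|\simeq 1-|a|^2$, and then multiply the integrand of the $i=0$ estimate \eqref{equa3.1} by the factor $\psi(z)\frac{1-\overline a\varphi(z)}{1-\overline a\psi(z)}$, which has modulus $\lesssim1$ there, before adding it to the $i=1$ estimate \eqref{equa3.2}: the $v$-terms cancel identically and one is left with
\[
|u(z)|^q\left|\frac{\varphi(z)-\psi(z)}{1-\overline a\psi(z)}\right|^q\simeq |u(z)|^q\rho(z)^q ,
\]
by \eqref{equa2.4}, i.e.\ the Carleson bound for $\omega_{\varphi,u}^{\alpha,q}$ with no bootstrap, no dominance partition, and in fact no hypothesis $u,v\in A_\alpha^q$. (Your $\varphi_a k_a$ would work the same way: multiply the $k_a$-estimate by $\varphi_a(\psi(z))$ and subtract; but this multiply-and-cancel step is exactly what your write-up omits.) The $\sigma$-measures then fall out of the \emph{same} two displays, via \eqref{equa3.5} and the lower bound $\bigl|\frac{1-\overline a\varphi}{1-\overline a\psi}\bigr|\gtrsim 1-\rho^2$ of \eqref{equa3.6}, which gives $\int_{\varphi^{-1}(D(a,r))}|u-v|^q(1-\rho)^{qN}dA_\alpha\lesssim(1-|a|^2)^{\frac qp(2+\alpha)}$ directly; your alternative route for $\sigma$ (split at $\rho=\tfrac12$, use the identity \eqref{equa2.1} and a lattice summation needing $\beta>\tfrac qp(\alpha+2)$) can be made to work once the $\omega$'s are in hand, but it is longer and you only gesture at the summation. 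As written, then, the proposal does not establish the key implication; repaired with the cancellation step it becomes essentially the paper's argument.
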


\begin{theorem}[\cite{5}]\label{theorem1.2}
Let $\alpha>-1$, $0<q<p<\infty$ and $\beta>\frac{q}{p}(\alpha+1+\max\{1,p\})$. Suppose $u,v\in A_{\alpha}^q(\mathbb{D})$ and $\varphi,\psi\in S(\mathbb{D})$. Then the following statements are equivalent.
\begin{itemize}
\item[(i)] $C_{u,\varphi}-C_{v,\psi}:A_{\alpha}^p(\mathbb{D})\to A_{\alpha}^q(\mathbb{D})$ is bounded;
\item[(ii)] $C_{u,\varphi}-C_{v,\psi}: A_{\alpha}^p(\mathbb{D})\to A_{\alpha}^q(\mathbb{D})$ is compact;
\item[(iii)] $\omega_q$ and $\sigma_{\beta,q}$ are $(p,q)$-Bergman Carleson measures.
\end{itemize}
\end{theorem}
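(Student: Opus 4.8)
The plan is to prove the cycle $(\mathrm{ii})\Rightarrow(\mathrm{i})\Rightarrow(\mathrm{iii})\Rightarrow(\mathrm{ii})$, of which $(\mathrm{ii})\Rightarrow(\mathrm{i})$ is trivial; the strategy is to replace the joint-Carleson-measure argument of \cite{CbCkKhYj1} by a single pointwise majorant for $C_{u,\varphi}-C_{v,\psi}$ in terms of the four pull-back measures, and then run the ordinary $(p,q)$-Bergman Carleson machinery together with the automatic compactness available when $q<p$. Fix $r\in(0,1)$, write $\rho=d(\varphi,\psi)$, let $D(w,r)=\{z:d(z,w)<r\}$, and for $f\in H(\mathbb{D})$ put
\[ \widetilde f(w)=\left(\frac{1}{(1-|w|^2)^{\alpha+2}}\int_{D(w,r)}|f|^p\,dA_\alpha\right)^{1/p}, \]
which dominates $|f|$ pointwise by the sub-mean value property for $|f|^p$. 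Decompose $\mathbb{D}=G\cup(\mathbb{D}\setminus G)$ with $G=\{\rho\le r\}$. On $G$ the points $\varphi(z),\psi(z)$ are pseudo-hyperbolically close, so integrating $f'$ along the hyperbolic geodesic joining them and applying Cauchy and sub-mean value estimates gives $|f(\varphi(z))-f(\psi(z))|\lesssim\rho(z)\,\widetilde f(\varphi(z))$ (with $\widetilde f$ taken relative to a slightly larger radius); plugging this into the splitting $uf\circ\varphi-vf\circ\psi=u(f\circ\varphi-f\circ\psi)+(u-v)f\circ\psi$ and using $(1-\rho(z))^{\beta/q}\asymp1$ on $G$ yields, on $G$,
\[ \bigl|(C_{u,\varphi}-C_{v,\psi})f(z)\bigr|\lesssim|\rho u|(z)\,\widetilde f(\varphi(z))+|\rho v|(z)\,\widetilde f(\psi(z))+(1-\rho(z))^{\beta/q}|u-v|(z)\bigl(\widetilde f(\varphi(z))+\widetilde f(\psi(z))\bigr). \]
On $\mathbb{D}\setminus G$, where $\rho\asymp1$, the same right-hand side dominates $|uf\circ\varphi-vf\circ\psi|\le|u|\widetilde f(\varphi)+|v|\widetilde f(\psi)$ directly.

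Raising this majorant to the power $q$, integrating against $dA_\alpha$, and changing variables through the definitions of $\omega_{\varphi,u}^{\alpha,q},\omega_{\psi,v}^{\alpha,q},\sigma_{\varphi,u,v}^{\alpha,q,\beta},\sigma_{\psi,u,v}^{\alpha,q,\beta}$ reduces $\|(C_{u,\varphi}-C_{v,\psi})f\|_{A_\alpha^q}^q$ to a sum of four integrals of the form $\int_{\mathbb{D}}\widetilde f^{\,q}\,d\mu$. A routine argument comparing $\widetilde f^{\,q}$ with averages of the subharmonic function $|f|^q$, together with the fact that the Berezin-type average of $\mu$ against $dA_\alpha$ is a $(p,q)$-Bergman Carleson measure precisely when $\mu$ is, turns each term into the embedding inequality $\int_{\mathbb{D}}|f|^q\,d\mu\lesssim\|f\|_{A_\alpha^p}^q$; hence $(\mathrm{iii})$ forces boundedness. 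For compactness I would use the feature special to the range $q<p$: if $\mu$ is a $(p,q)$-Bergman Carleson measure then its averaging function lies in $L^{p/(p-q)}(dA_\alpha)$, and splitting $\mu$ along $\{|z|<1-\delta\}$ and $\{|z|\ge1-\delta\}$ and letting $\delta\to0$ exhibits the embedding $A_\alpha^p\hookrightarrow L^q(\mu)$ as a norm-limit of compact operators. Applying this to the four measures and feeding into the majorant above any bounded sequence in $A_\alpha^p$ that converges to $0$ uniformly on compact subsets shows that $C_{u,\varphi}-C_{v,\psi}$ sends it to a norm-null sequence in $A_\alpha^q$; that is, $(\mathrm{iii})\Rightarrow(\mathrm{ii})$, and in particular $(\mathrm{iii})\Rightarrow(\mathrm{i})$.

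The implication $(\mathrm{i})\Rightarrow(\mathrm{iii})$ is where the actual difficulty lies, and I expect it to be the main obstacle. I would test the bounded operator on the normalized functions $f_w(z)=(1-|w|^2)^{N}(1-\bar w z)^{-N-(\alpha+2)/p}$, for which $\|f_w\|_{A_\alpha^p}\asymp1$ uniformly in $w\in\mathbb{D}$. On $\{\rho>r\}$ the images $\varphi(z),\psi(z)$ are pseudo-hyperbolically separated, so for $N$ large $f_w\circ\varphi$ and $f_w\circ\psi$ cannot be comparably large at the same $z$; applying the operator bound to Rademacher-randomized sums $\sum_j\varepsilon_j c_j f_{a_j}$ over an $r$-lattice $\{a_j\}$, using Khinchin's inequality and the Luecking-type duality between $\ell^p$ and $\ell^{p/(p-q)}$, forces the averaging functions of $\omega_{\varphi,u}^{\alpha,q}$ and $\omega_{\psi,v}^{\alpha,q}$ into $L^{p/(p-q)}(dA_\alpha)$, which by the embedding theorem of Section~2.3 is exactly the assertion that these two measures are $(p,q)$-Bergman Carleson. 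On the near-diagonal set $\{\rho\le r\}$ one must instead recover the $(1-\rho)^{\beta}$-weighted measures $\sigma_{\varphi,u,v}^{\alpha,q,\beta},\sigma_{\psi,u,v}^{\alpha,q,\beta}$; here I would test on a pair of functions — $f_w$ together with a companion vanishing to first order at $w$ — so that $uf_w\circ\varphi-vf_w\circ\psi$ detects $|u-v|$ up to a factor comparable to $(1-\rho)^{\beta/q}$, the exponent appearing because differentiating $f_w$ near its peak costs a factor $(1-|w|^2)^{-1}\asymp(1-\rho(z))^{-1}$ (by symmetry $C_{u,\varphi}-C_{v,\psi}=-(C_{v,\psi}-C_{u,\varphi})$ the $\varphi$- and $\psi$-indexed estimates are interchangeable, so it suffices to treat one of them). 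Making this cancellation analysis quantitative enough to produce exactly the $(1-\rho)^{\beta}$-weighted pull-back measures — the unweighted versions being false in general — is the crux; the hypothesis $\beta>\frac{q}{p}(\alpha+1+\max\{1,p\})$ is precisely the threshold for which the resulting integrals converge, the extra $\max\{1,p\}$ over the $p\le q$ threshold of Theorem~\ref{theorem1.1} being the cost of the $\ell^p$-atomic decomposition when $p>1$. Finally, since $q<p$ the four $(p,q)$-Bergman Carleson measures so obtained are automatically compact, so the same argument delivers $(\mathrm{i})\Rightarrow(\mathrm{ii})$ with no extra work, closing the cycle.
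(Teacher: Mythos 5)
Your overall skeleton (test functions built on an $r$-lattice, Rademacher randomization, Khinchine's inequality, duality between $\ell^{p/q}$ and $\ell^{p/(p-q)}$, plus Luecking's characterization of $(p,q)$-Carleson measures and the automatic compactness in the range $q<p$) is the same machinery the paper uses, and your $(\mathrm{iii})\Rightarrow(\mathrm{ii})\Rightarrow(\mathrm{i})$ direction is fine in outline. The genuine gap is in $(\mathrm{i})\Rightarrow(\mathrm{iii})$, exactly where you yourself say "the crux" lies: you never carry out the cancellation analysis, and the one concrete claim you make about it is wrong. On $\{\rho>r\}$ it is \emph{not} true that $f_w\circ\varphi$ and $f_w\circ\psi$ "cannot be comparably large at the same $z$": pseudo-hyperbolic separation of $\varphi(z)$ and $\psi(z)$ only prevents both from lying in a small disc around $w$, but both values of $|f_w|$ can still be of the same order, and the whole difficulty of the subject is that $u\,f_w\circ\varphi$ and $v\,f_w\circ\psi$ may then nearly cancel while each term is large. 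So testing on randomized sums of the single family $f_{a_j}$ does not, by itself, force the averaging functions of $\omega_{\varphi,u}^{\alpha,q}$ and $\omega_{\psi,v}^{\alpha,q}$ into $L^{p/(p-q)}$; and your near-diagonal plan ("a companion vanishing to first order at $w$") is left entirely as a declaration of intent.

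The paper resolves this with a purely algebraic two-test-function trick, with no splitting into near- and far-diagonal regions at all. One tests on the lattice sums $H_{N,p}^{[i]}$ for \emph{both} $i=0$ and $i=1$ (i.e.\ with an extra monomial factor $z^i$). After Khinchine and the finite-overlap/duality step this yields, for each lattice point $a_j$, control of the local integrals of $\bigl|u-v\lambda^N\bigr|^q$ and $\bigl|u\varphi-v\psi\lambda^{N+1}\bigr|^q$ over $\varphi^{-1}(D(a_j,2r))$, where $\lambda=\frac{1-\overline{a}_j\varphi}{1-\overline{a}_j\psi}$. Multiplying the first quantity by the bounded factor $|\psi\lambda|$ and adding the second, the identity $\varphi-\psi\lambda=\frac{\varphi-\psi}{1-\overline{a}_j\psi}$ together with $|1-\overline{a}_j\psi|\simeq|1-\overline{\varphi}\psi|$ on $\varphi^{-1}(D(a_j,2r))$ isolates $|u|^q\rho^q$ regardless of the size of $\rho$; this gives the Carleson property of $\omega_{\varphi,u}^{\alpha,q}$ (and symmetrically $\omega_{\psi,v}^{\alpha,q}$). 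The $\sigma$-measures then come for free from the elementary inequalities $|u-v|\,|\lambda|^N\le|u-v\lambda^N|+|u||1-\lambda^N|\lesssim|u-v\lambda^N|+|u|\rho$ and $|\lambda|\gtrsim 1-\rho^2$, which give $|u-v|(1-\rho)^N\lesssim|u-v\lambda^N|+|u|\rho$ and hence the $(1-\rho)^{qN}$-weighted estimate for any $\beta\ge qN$. Without this (or an equivalent) mechanism for defeating the cancellation, your proof of $(\mathrm{i})\Rightarrow(\mathrm{iii})$ does not go through; with it, your lattice/Khinchine/duality framework does close the argument, and in fact it also removes the hypothesis $u,v\in A_\alpha^p(\mathbb{D})$, as the paper notes.
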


In order to prove $(i)\Rightarrow(ii)$ in Theorem \ref{theorem1.1} and $(i)\Rightarrow(iii)$ in Theorem \ref{theorem1.2}, which are the hardest steps, several technical lemmas were applied in \cite{5}, see also \cite{4} for the case $p=q$. To the best of our knowledge, most of the studies of the difference of weighted composition operators rely on these lemmas in \cite{4} and \cite{5}, see for example \cite{16, 20}. Our first aim in this paper is to construct a new method and give a simpler proof for $(i)\Rightarrow (ii)$ in Theorem \ref{theorem1.1} and $(i)\Rightarrow(iii)$ in Theorem \ref{theorem1.2}. Moreover, the condition $u,v\in A_{\alpha}^q(\mathbb{D})$ is not needed in our proof. And the proof does not rely on those technical lemmas. Quite recently, Choe et al. \cite{3} extended the results in \cite{5} to the ball setting and removed the $L^q$-condition of the weights $u,v$. We point out that our method here is different from \cite{3}.

In addition to studying the bounded and compact differences of (weighted) composition operators, Choe   et al. \cite{6} characterized the Hilbert-Schmidt property of the difference $C_{\varphi}-C_{\psi} $ on $A_{\alpha}^2(\mathbb{D})$ and showed that $C_{\varphi}$ and $C_{\psi}$ are in the same path component in the space of composition operators under the Hilbert-Schmidt norm if and only if $C_{\varphi}-C_{\psi}$ is Hilbert-Schmidt. The result was then extended to the ball setting by Zhang and Zhou \cite{27}. Acharyya and Wu \cite{1} first characterized Hilbert-Schmidt difference of two weighted composition operators acting from Bergman space or Hardy space to an $L^2(\mu)$ space. For $0<p<\infty$, we recall that a compact operator $T$ acting on a separated Hilbert space $\mathcal{H}$ belongs to the Schatten class $S_p(\mathcal{H})$ if its sequence of singular numbers belongs to the sequence space $l^p$ (the singular numbers are the square roots of the eigenvalues of the positive operator $T^{*}T$). $T$ is said to be Hilbert-Schmidt if $T\in S_2(\mathcal{H})$. One can refer to \cite[Chapter 1]{29} for some details about Schatten classes. As far as we know, there are few papers studying the Schatten-$p$ class differences of weighted composition operators. So our second aim in this paper is to give some characterizations of operator differences belonging to Schatten-$p$ class.

For a positive Borel measure $\mu$ on $\mathbb{D}$ and $r>0$, we put
\[M_r(\mu)(z)=\frac{\mu(D(z,r))}{(1-|z|^2)^{2+\alpha}},\quad z\in\mathbb{D}\]
for the averaging function of $\mu$, where $D(z,r)$ is the Bergman metric disk centered at $z$ with radius $r$, see Section 2.1 and 2.4. Let 
\[{\rm d}\lambda(z)=\frac{{\rm d}A(z)}{(1-|z|^2)^2}\]
be the M$\ddot{\rm o}$bius invariant measure on $\mathbb{D}$. Our main result states as follows.

\begin{theorem}\label{theorem1.3}
Let $\alpha>-1$, $0<p<\infty$ and $\beta>0$. Suppose $u,v\in H(\mathbb{D})$ and $\varphi,\psi\in S(\mathbb{D})$. If $M_r(\omega_2)$ and $M_r(\sigma_{\beta,2})$ belong to $L^{\frac{p}{2}}(\mathbb{D},d\lambda)$ for some (or any) $r>0$, then $C_{u,\varphi}-C_{v,\psi}\in S_p(A_{\alpha}^2)$. Moreover, when $p\geq 2$ and $\beta\geq 2(\alpha+2)$, the above condition is also necessary for $C_{u,\varphi}-C_{v,\psi}\in S_p(A_{\alpha}^2)$.
\end{theorem}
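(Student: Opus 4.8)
The plan is to compare $C_{u,\varphi}-C_{v,\psi}$ with a single Toeplitz operator and then invoke the Schatten-class criterion for Toeplitz operators. Throughout, write
\[
\mu:=\omega_{\varphi,u}^{\alpha,2}+\omega_{\psi,v}^{\alpha,2}+\sigma_{\varphi,u,v}^{\alpha,2,\beta}+\sigma_{\psi,u,v}^{\alpha,2,\beta},
\]
let $K_a^{\alpha}$ be the reproducing kernel of $A_\alpha^2$ at $a$, $k_a^{\alpha}=K_a^{\alpha}/\|K_a^{\alpha}\|_{A_\alpha^2}$ its normalization, and $T_\mu$ the Toeplitz operator on $A_\alpha^2$ with symbol $\mu$, so that $\langle T_\mu f,f\rangle=\int_{\mathbb D}|f|^2\,d\mu$. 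Since $M_r$ is additive on measures, the hypothesis of the theorem says exactly that $M_r(\mu)\in L^{p/2}(\mathbb D,d\lambda)$; by Luecking's trace-ideal criterion for Toeplitz operators (see also \cite[Chapter~7]{Zk}) this implies $T_\mu\in S_{p/2}(A_\alpha^2)$ for every $p>0$, and is equivalent to it when $p\ge2$. Moreover, since $\mu\ge\omega_{\varphi,u}^{\alpha,2}$ etc.\ as measures, one automatically has $M_r(\omega_{\varphi,u}^{\alpha,2})\le M_r(\mu)$ and likewise for the other three.

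For the sufficiency I would first record the pointwise-to-integral domination
\[
\big\|(C_{u,\varphi}-C_{v,\psi})f\big\|_{A_\alpha^2}^2\;\lesssim\;\int_{\mathbb D}|f|^2\,d\mu,\qquad f\in A_\alpha^2,
\]
which is precisely the estimate behind the implication ``the four pull-back measures are $(2,2)$-Bergman Carleson $\Rightarrow$ $C_{u,\varphi}-C_{v,\psi}$ bounded'' of Theorem~\ref{theorem1.1}, obtained by the method developed in the earlier part of the paper; here $\beta\ge2(\alpha+2)$ lies comfortably in the range $\beta>\alpha+2$ (the case $p=q=2$ of Theorems~\ref{theorem1.1} and~\ref{theorem1.2}), so the estimate is available. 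Rewritten, this says $(C_{u,\varphi}-C_{v,\psi})^{*}(C_{u,\varphi}-C_{v,\psi})\le C\,T_\mu$ as positive operators on $A_\alpha^2$, so by the min--max principle $s_n(C_{u,\varphi}-C_{v,\psi})^{2}=s_n\big((C_{u,\varphi}-C_{v,\psi})^{*}(C_{u,\varphi}-C_{v,\psi})\big)\le C\,s_n(T_\mu)$ for all $n$. Summing $\tfrac p2$-th powers gives $\|C_{u,\varphi}-C_{v,\psi}\|_{S_p}^{p}\le C^{p/2}\|T_\mu\|_{S_{p/2}}^{p/2}<\infty$, i.e.\ $C_{u,\varphi}-C_{v,\psi}\in S_p(A_\alpha^2)$. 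This half works for every $p>0$.

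For the necessity, assume $p\ge2$ and $T:=C_{u,\varphi}-C_{v,\psi}\in S_p(A_\alpha^2)$, and fix an $r$-lattice $\{a_k\}$ in $\mathbb D$. Since $p\ge2$, the linear map $S\mapsto(Sk_{a_k}^{\alpha})_k$ is bounded from $S_2(A_\alpha^2)$ into $\ell^2(A_\alpha^2)$ (Bessel's inequality: $\sum_k\|Sk_{a_k}^{\alpha}\|^2=\operatorname{tr}(Q_rS^{*}S)\le\|Q_r\|\,\|S\|_{S_2}^2$ with $Q_r=\sum_k k_{a_k}^{\alpha}\otimes k_{a_k}^{\alpha}$ a bounded operator) and from $\mathcal B(A_\alpha^2)$ into $\ell^\infty(A_\alpha^2)$, hence by interpolation from $S_p(A_\alpha^2)$ into $\ell^p(A_\alpha^2)$; the same holds for $T^{*}$ and for any fixed family $\{g_{a_k}\}$ of functions localized at $a_k$ with uniformly bounded overlap (e.g.\ renormalized unimodular multiples of $z\mapsto(z-a_k)k_{a_k}^{\alpha}(z)$ and their low powers). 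It then remains to establish the local lower bound
\[
\|Tk_{a_k}^{\alpha}\|_{A_\alpha^2}^2+\|Tg_{a_k}\|_{A_\alpha^2}^2+\|T^{*}k_{a_k}^{\alpha}\|_{A_\alpha^2}^2\;\gtrsim\;M_r(\mu)(a_k),\qquad k\in\mathbb N,
\]
after which $\{M_r(\mu)(a_k)\}_k\in\ell^{p/2}$, hence $M_r(\mu)\in L^{p/2}(\mathbb D,d\lambda)$ (using the equivalence, recorded earlier in the paper, between membership of $M_r$ of a measure in $L^{p/2}(d\lambda)$ and $\tfrac p2$-summability of its lattice values, independently of $r$), and therefore $M_r(\omega_{\varphi,u}^{\alpha,2}),\dots,M_r(\sigma_{\psi,u,v}^{\alpha,2,\beta})$ all lie in $L^{p/2}(\mathbb D,d\lambda)$ as well.

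The main obstacle is this local lower bound, the difficulty being cancellation between $u\cdot f\circ\varphi$ and $v\cdot f\circ\psi$: already for $u=v$ with $\varphi\ne\psi$ one has $(C_{u,\varphi}-C_{v,\psi})1=0$ while $M_r(\omega_{\varphi,u}^{\alpha,2})$ need not vanish, so reproducing kernels alone cannot detect the $\omega$-measures near the origin. I would treat it by splitting $\mathbb D$ according to the size of $\rho$. On $\{\rho\ge\varepsilon\}$ the weights $\rho^{2}$ and $(1-\rho)^{\beta}|u-v|^{2}$ are both comparable to combinations of $\rho^{2}|u|^{2}$ and $\rho^{2}|v|^{2}$; restricting the defining integrals of the $M_r$'s to $\varphi^{-1}(D(a_k,r))$ (resp.\ $\psi^{-1}(D(a_k,r))$), using $|k_{a_k}^{\alpha}\circ\varphi|\asymp(1-|a_k|^2)^{-(\alpha+2)/2}$ there, and using the pseudohyperbolic separation of $k_{a_k}^{\alpha}\circ\varphi$ and $k_{a_k}^{\alpha}\circ\psi$ (where no cancellation can occur), the test functions detect $M_r(\omega_{\varphi,u}^{\alpha,2})(a_k)$ and $M_r(\omega_{\psi,v}^{\alpha,2})(a_k)$, the auxiliary functions $g_{a_k}$ and the adjoint $T^{*}$ recovering the component coming from where $\varphi,\psi$ stay away from $\partial\mathbb D$ (where kernels do not vary). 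On $\{\rho<\varepsilon\}$ one has $(1-\rho)^{\beta}\asymp1$ and, by a Schwarz--Pick type estimate, $|k_{a_k}^{\alpha}\circ\varphi-k_{a_k}^{\alpha}\circ\psi|\lesssim\rho\,|k_{a_k}^{\alpha}\circ\varphi|$, which lets one peel $(u-v)\,k_{a_k}^{\alpha}\circ\varphi$ off $u\,k_{a_k}^{\alpha}\circ\varphi-v\,k_{a_k}^{\alpha}\circ\psi$ with an error dominated by the $\omega$-terms already controlled; this recovers $M_r(\sigma_{\varphi,u,v}^{\alpha,2,\beta})(a_k)$ and $M_r(\sigma_{\psi,u,v}^{\alpha,2,\beta})(a_k)$. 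The hypothesis $\beta\ge2(\alpha+2)$ supplies the extra decay of $(1-\rho)^{\beta}$ that makes all the error terms of this splitting absorbable, and the restriction $p\ge2$ in the converse is forced both by the sampling estimate above and by the range $p/2\ge1$ of the Toeplitz criterion.
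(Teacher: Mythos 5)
Your sufficiency half has a genuine gap at its very first step. You compare $(C_{u,\varphi}-C_{v,\psi})^*(C_{u,\varphi}-C_{v,\psi})$ with $T_\mu$ for $\mu=\omega_{\varphi,u}^{\alpha,2}+\omega_{\psi,v}^{\alpha,2}+\sigma_{\varphi,u,v}^{\alpha,2,\beta}+\sigma_{\psi,u,v}^{\alpha,2,\beta}$, asserting that $\|(C_{u,\varphi}-C_{v,\psi})f\|_{A_\alpha^2}^2\lesssim\int_{\mathbb D}|f|^2\,d\mu$ is ``precisely the estimate behind'' Theorem \ref{theorem1.1}. It is not, and it fails as a uniform estimate: on the set where $\rho$ is small, the cross term $(|u|^2+|v|^2)|f\circ\varphi-f\circ\psi|^2$ can only be controlled via the oscillation estimate of Lemma \ref{lemma2.01}, which produces a \emph{local average} of $|f|^2$ over $D(\varphi(z),2r)$ or $D(\psi(z),2r)$, not the pointwise values $|f(\varphi(z))|^2$, $|f(\psi(z))|^2$ occurring in $\int|f|^2d\mu$. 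Concretely, with $u=v=1$, $\varphi\equiv a$, $\psi\equiv b$, $d(a,b)=\rho_0$ small and $f(w)=w-\tfrac{a+b}{2}$, the $\sigma$-measures vanish, the left side is $|a-b|^2$ while $\int|f|^2d\mu=\tfrac12\rho_0^2|a-b|^2$, so the best constant blows up like $\rho_0^{-2}$; nothing you cite supplies a finite (even symbol-dependent) constant in general. The correct comparison operator is the Toeplitz operator whose symbol has \emph{density} $M_{2r}(\omega_{\varphi,u}^{\alpha,2})+M_{2r}(\omega_{\psi,v}^{\alpha,2})+M_{2r}(\sigma_{\varphi,u,v}^{\alpha,2,\beta})+M_{2r}(\sigma_{\psi,u,v}^{\alpha,2,\beta})$, obtained by splitting $\mathbb D$ according to the size of $\rho$ and using Lemma \ref{lemma2.01} plus Fubini as in \eqref{equa2.7}; one then needs the extra step of Lemma \ref{lemma2.08} to pass from the hypothesis $M_r(\cdot)\in L^{p/2}(\mathbb D,d\lambda)$ for the four original measures to the Schatten membership of this averaged Toeplitz operator. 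With that repair, your min--max argument is exactly the paper's sufficiency proof.

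For the necessity, your sampling framework (Bessel property of $\{k_{a_k}\}$ along a lattice, interpolation between $S_2\to\ell^2$ and $S_\infty\to\ell^\infty$, then transfer of $\ell^{p/2}$ lattice data to $L^{p/2}(d\lambda)$) is a workable alternative to the paper's route, which instead applies the Berezin-transform monotonicity of Lemmas \ref{lemma2.05} and \ref{lemma2.06} to $T^*T$ to get directly that $z\mapsto\|(C_{u,\varphi}-C_{v,\psi})k_z^{[n]}\|_{A_\alpha^2}^2$ lies in $L^{p/2}(\mathbb D,d\lambda)$. But the step you yourself identify as the main obstacle, the local lower bound $\|Tk_{a_k}\|^2+\|Tg_{a_k}\|^2+\|T^*k_{a_k}\|^2\gtrsim M_r(\mu)(a_k)$, is only sketched, and the sketch is not sound as stated: even when $\varphi(z)$ and $\psi(z)$ are pseudo-hyperbolically separated inside $D(a_k,r)$, the two terms $u(z)k_{a_k}(\varphi(z))$ and $v(z)k_{a_k}(\psi(z))$ can cancel pointwise for any single test function, so ``separation implies no cancellation'' is false. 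The missing mechanism is the elimination trick carried out in \eqref{equa3.16}--\eqref{equa3.19}: test with kernels of orders $0$ and $1$, multiply the order-$0$ quantity by the bounded factor $\psi(w)\frac{1-\overline z\varphi(w)}{1-\overline z\psi(w)}$ and add it to the order-$1$ quantity to isolate $|u|\rho$ on $\varphi^{-1}(D(z,r))$, and then use $\bigl|1-\bigl(\frac{1-\overline z\varphi}{1-\overline z\psi}\bigr)^{2+\alpha}\bigr|\lesssim\rho$ together with $\beta\ge 2(2+\alpha)$ to recover the $\sigma$-measures. Your auxiliary functions $g_{a_k}\propto(z-a_k)k_{a_k}$ are morally the order-$1$ kernels, so the right objects are in your toolbox, but the actual derivation (and hence the converse) is not given; the appeal to $T^*$ is neither needed nor substantiated.
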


By \eqref{equa1.1}, we know that $f\in H^2(\mathbb{D})$ if and only if $f'\in A_{1}^2(\mathbb{D})$. Choe et al. \cite{4} showed that $C_{\varphi}-C_{\psi}$ is compact on $H^2(\mathbb{D})$ if and only if $C_{\varphi',\varphi}-C_{\psi',\psi}$ is compact on $A_1^2(\mathbb{D})$ and obtained a compact criterion for the difference of two composition operators acting on $H^2(\mathbb{D})$. However, it will be more difficult to characterize the bounded and compact differences of two weighted composition operators $C_{u,\varphi}-C_{v,\psi}$ on Hardy spaces. Our next result shows a rigidity of the difference $C_{u,\varphi}-C_{v,\psi}$ on $H^2(\mathbb{D})$.
 
\begin{theorem}\label{theorem1.4}
Let $u,v\in H(\mathbb{D})$ and $\varphi,\psi\in S(\mathbb{D})$. Suppose $C_{u,\varphi}$ and $C_{v,\psi}$ are bounded on $H^2(\mathbb{D})$, then $C_{u,\varphi}-C_{v,\psi}$ is compact on $H^2(\mathbb{D})$ if and only if both $C_{u',\varphi}-C_{v',\psi}: H^2(\mathbb{D})\to A_1^2(\mathbb{D})$ and $C_{u\varphi',\varphi}-C_{v\psi',\psi}:A_1^2(\mathbb{D})\to A_1^2(\mathbb{D})$ are compact.
\end{theorem}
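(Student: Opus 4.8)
We outline the strategy. The plan is to move the problem from $H^2(\mathbb{D})$ to $A_1^2(\mathbb{D})$ by differentiation, in the spirit of \cite{CbCkKhYj}. Put $T=C_{u,\varphi}-C_{\psi}$; note that $T$ bounded on $H^2$ already forces $u=T1+1\in H^2$, hence $u'\in A_1^2(\mathbb{D})$. For $f\in H(\mathbb{D})$ a direct computation gives
\[
(Tf)'=u'\cdot f\circ\varphi+u\varphi'\cdot f'\circ\varphi-\psi'\cdot f'\circ\psi=C_{u',\varphi}f+\bigl(C_{u\varphi',\varphi}-C_{\psi',\psi}\bigr)f'.
\]
By \eqref{equa1.1}, $\|g\|_{H^2}^2\simeq|g(0)|^2+\|g'\|_{A_1^2}^2$, and since $f\mapsto(Tf)(0)$ is a bounded functional on $H^2$, the operator $T$ is compact on $H^2$ if and only if $f\mapsto(Tf)'$ is compact from $H^2$ to $A_1^2$. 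Differentiation $D\colon H^2\to A_1^2$ and the antiderivative $R\colon A_1^2\to H^2$, $(Rg)(z)=\int_0^z g$, are both bounded by \eqref{equa1.1}, with $DR=\mathrm{id}$ and $RD=\mathrm{id}-E_0$ where $E_0f=f(0)$ has rank one. A routine manipulation with $D$, $R$, $E_0$ then shows that $C_{u',\varphi}\colon H^2\to A_1^2$ is compact iff $C_{u',\varphi}R$ is compact on $A_1^2$, and that $B:=C_{u\varphi',\varphi}-C_{\psi',\psi}$ is compact on $A_1^2$ iff $BD$ is compact from $H^2$ to $A_1^2$. Since $DT=C_{u',\varphi}+BD$ on $H^2$, the theorem is therefore equivalent to the statement: $C_{u',\varphi}+BD$ is compact from $H^2$ to $A_1^2$ if and only if both $C_{u',\varphi}\colon H^2\to A_1^2$ and $B$ on $A_1^2$ are compact.

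The implication ``$\Leftarrow$'' is easy. If $f_n\to0$ weakly in $H^2$ with $\sup_n\|f_n\|_{H^2}<\infty$, then $f_n\to0$ and $f_n'\to0$ uniformly on compact subsets of $\mathbb{D}$, while $\{f_n'\}$ is bounded in $A_1^2$ by \eqref{equa1.1}, so $f_n'\to0$ weakly in $A_1^2$; compactness of $C_{u',\varphi}$ and of $B$ gives $\|C_{u',\varphi}f_n\|_{A_1^2}\to0$ and $\|Bf_n'\|_{A_1^2}\to0$, hence $\|(Tf_n)'\|_{A_1^2}\to0$, and together with $(Tf_n)(0)\to0$ this yields $\|Tf_n\|_{H^2}\to0$.

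For ``$\Rightarrow$'', assume $T$ is compact, so $DT=C_{u',\varphi}+BD$ is compact. The crucial point is to prove that $C_{u',\varphi}\colon H^2\to A_1^2$ is compact; then $BD=DT-C_{u',\varphi}$ is compact, hence $B$ is compact on $A_1^2$, and we are done. (I would attack $C_{u',\varphi}$ rather than $B$ first, since $C_{u',\varphi}$ is a single weighted composition operator whose compactness is detected already on the reproducing kernels.) Indeed $\|C_{u',\varphi}f\|_{A_1^2}^2=\int_{\mathbb{D}}|f|^2\,d\nu$ with $\nu=(|u'|^2\,dA_1)\circ\varphi^{-1}$, so by the vanishing–Carleson criterion for $\nu$ it suffices to show $\lim_{|a|\to1}\|C_{u',\varphi}k_a\|_{A_1^2}=0$, where $k_a(z)=(1-|a|^2)^{1/2}/(1-\overline{a}z)$ are the normalized reproducing kernels of $H^2$. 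From compactness of $DT$ and $k_a\to0$ weakly we already know
\[
\bigl\|\,u'\cdot k_a\circ\varphi+u\varphi'\cdot k_a'\circ\varphi-\psi'\cdot k_a'\circ\psi\,\bigr\|_{A_1^2}\longrightarrow0\qquad(|a|\to1),
\]
so everything reduces to isolating $\|u'\cdot k_a\circ\varphi\|_{A_1^2}\to0$ from this sum.

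I expect this last extraction to be the main obstacle, since a priori the ``first–derivative'' term $u'\cdot k_a\circ\varphi$ could be cancelled in $A_1^2$-norm by the two ``composition'' terms $u\varphi'\cdot k_a'\circ\varphi$ and $\psi'\cdot k_a'\circ\psi$. The plan to rule this out is a localization argument: split $\mathbb{D}=E_a\cup(\mathbb{D}\setminus E_a)$ with $E_a=\varphi^{-1}(D(a,r))$, $D(a,r)=\{z\in\mathbb{D}:d(z,a)<r\}$. Off $E_a$, the $A_1^2$-mass of $u'\cdot k_a\circ\varphi$ is $O(r)$ by a standard Carleson tail estimate for $\nu$, so one is reduced to the behaviour on $E_a$, where $|k_a\circ\varphi|\simeq(1-|a|^2)^{-1/2}$ while $|k_a'\circ\varphi|\simeq(1-|a|^2)^{-3/2}$; the three terms thus sit at different orders in $1-|a|^2$, so a genuine cancellation would force pointwise size relations between $u'$, $u\varphi'$, $\psi'$ on $E_a$ together with a pseudo-hyperbolic proximity of $\varphi$ and $\psi$ there, measured by $\rho$, which one controls exactly as in the compact–difference analysis behind Theorem~\ref{theorem1.1}. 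Carrying out this bookkeeping — choosing $r$ small, then letting $|a|\to1$ — should give $\|C_{u',\varphi}k_a\|_{A_1^2}^2\lesssim o(1)+\|(Tk_a)'\|_{A_1^2}^2\to0$, completing the argument.
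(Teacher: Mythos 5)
Your reduction via the Littlewood--Paley identity and the factorization $(Tf)'=C_{u',\varphi}f+(C_{u\varphi',\varphi}-C_{\psi',\psi})f'$, as well as the sufficiency direction, are fine and agree with the paper. The genuine gap is in the necessity direction, and it is exactly the step you yourself flag as "the main obstacle": you never actually extract $\|u'\cdot k_a\circ\varphi\|_{A_1^2}\to 0$ from the vanishing of the full sum, and the sketch you give for doing so does not work as stated. First, the "standard Carleson tail estimate for $\nu$" off $E_a=\varphi^{-1}(D(a,r))$ is circular: $\nu=(|u'|^2dA_1)\circ\varphi^{-1}$ being a (vanishing) Hardy--Carleson measure is precisely what you are trying to prove, and not even boundedness of $C_{u',\varphi}:H^2\to A_1^2$ is available a priori from boundedness of $T$ on $H^2$. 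Moreover, at the critical exponent used in $k_a(z)=(1-|a|^2)^{1/2}/(1-\overline a z)$ the tail of $\int|k_a|^2d\nu$ over the complement of a pseudo-hyperbolic ball is not small even for genuine Carleson measures; this is why one works with test functions of large power $N$. Second, the order-counting heuristic ("the three terms sit at different orders in $1-|a|^2$") cannot rule out cancellation, because the factors $u'$, $u\varphi'$, $\psi'$ are arbitrary analytic functions with no a priori pointwise comparison; with only the single family $k_a$ (order $i=0$) the cancellation scenario is simply not excluded.

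By contrast, the paper's proof overcomes this by testing with three families $g^{[i]}_{a,N,2}$, $i=0,1,2$, producing the three limits \eqref{equa4.1}--\eqref{equa4.3}, and then performing algebraic eliminations (multiplying integrands by factors of the form $(1-|a|^2)^2|\varphi|^2/|1-\overline a\varphi|^2$ and shifting $N$ to $N+1$, $N+2$) to arrive at \eqref{equa4.10}--\eqref{equa4.11}; these show via Corollary \ref{corollary3.1} that $C_{u\varphi',\varphi}-C_{\psi',\psi}$ is compact on $A_1^2(\mathbb{D})$ \emph{first}, and only then is the $u'$-term isolated by combining \eqref{equa4.10} with \eqref{equa4.1}, after which Wu's vanishing Hardy--Carleson criterion \cite{Wz} gives compactness of $C_{u',\varphi}:H^2(\mathbb{D})\to A_1^2(\mathbb{D})$. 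Your plan attacks $C_{u',\varphi}$ first with a single test family, which is where it stalls; to complete it you would in effect have to reproduce the multi-family elimination scheme (or an equivalent argument), so as written the proposal is an outline with the decisive step missing.
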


This paper is organized as follows. In Section 2, we collect some preliminary facts and auxiliary lemmas that will be used later. Section 3 is devoted to the new simpler proof of $(i)\Rightarrow(ii)$ in Theorem \ref{theorem1.1} and $(i)\Rightarrow(iii)$ in Theorem \ref{theorem1.2}. The proof of Theorem \ref{theorem1.3} is also presented in Section 4. Moreover, we obtain a Schatten-$p$ class characterization for the difference $C_{u,\varphi}-C_{v,\psi}$ via Reproducing Kernel Thesis. In Section 5, we prove Theorem \ref{theorem1.4} and characterize the linear sums of two composition operators on $H^2(\mathbb{D})$.

Throughout the paper we use the same letter $C$ to denote positive constants which may vary at different occurrences but do not depend on the essential argument. For non-negative quantities $A$ and $B$, we write $A\lesssim B$ (or equivalently $B\gtrsim A$) if there exists an absolute constant $C>0$ such that $A\leq CB$. $A\simeq B$ means both $A\lesssim B$ and $B\lesssim A$.

\section{Preliminaries}

In this section, we recall some basic facts and present some auxiliary lemmas that will be used in the sequel.

\subsection{Pseudo-hyperbolic Distance}

The pseudo-hyperbolic distance between $z,w\in\mathbb{D}$ is given by 
\begin{equation*}
d(z,w)=\left|\frac{z-w}{1-\overline{z}w}\right|.
\end{equation*}
And the Bergman metric between $z,w\in\mathbb{D}$ is given by 
\begin{equation*}
\beta(z,w)=\frac{1}{2}\log\frac{1+d(z,w)}{1-d(z,w)}.
\end{equation*}
Write $D(z,r)=\{w\in\mathbb{D}: \beta(z,w)<r\}$ for the Bergman metric disk centered at $z$ with radius $r>0$.

It is easy to check that
\begin{equation}\label{equa2.1}
1-d^2(z,w)=\frac{(1-|z|^2)(1-|w|^2)}{|1-\overline{z}w|^2}.
\end{equation}

Given $r>0$, it is also well-known that
\begin{equation}\label{equa2.2}
A_{\alpha}(D(z,r))\simeq (1-|z|^2)^{2+\alpha},
\end{equation}
and 
\begin{equation}\label{equa2.3}
1-|z|^2\simeq 1-|w|^2\simeq |1-\overline{z}w|
\end{equation}
for all $z\in\mathbb{D}$ and $w\in D(z,r)$. Moreover,
\begin{equation}\label{equa2.4}
|1-\overline{a}z|\simeq |1-\overline{a}w|
\end{equation}
for all $a$, $z$ and $w$ in $\mathbb{D}$ with $\beta(z,w)<r$. Here, all the constants suppressed in these estimates depend only on $r$.

The following lemma, which is used in the proof of Theorem \ref{theorem1.3}, can be found in \cite[Lemma 2.2]{15}.

\begin{lemma}[\cite{15}]\label{lemma2.1}
Let $\alpha>-1$, $0<p<\infty$ and $0<r_1<r_2$. Then there is a constant $C=C(\alpha, p, r_1, r_2)$ such that
\begin{equation*}
|f(z)-f(w)|^p\leq C\frac{d^p(z,w)}{(1-|z|^2)^{2+\alpha}}\int_{D(z,r_2)}|f(\zeta)|^p{\rm d}A_{\alpha}(\zeta)
\end{equation*}
for any $f\in A_{\alpha}^p(\mathbb{D})$ and $z,w\in\mathbb{D}$ with $\beta(z,w)<r_1$.
\end{lemma}

A sequence $\{a_j\}$ in $\mathbb{D}$ is called an $r$-lattice in the Bergman metric if the following conditions hold:
\begin{itemize}
\item[(i)] $\bigcup\limits_{j=1}^{\infty}D(a_j,r)=\mathbb{D}$,
\item[(ii)] $\{D(a_j,r/2)\}_{j=1}^{\infty}$ are pairwise disjoint.
\end{itemize}
For any $r>0$, the existence of $r$-lattice in the Bergman metric is ensured by \cite[Lemma 4.8]{29}. And according to \cite[Lemma 4.7]{29}, with hypotheses (i) and (ii), it is easy to check that
\begin{itemize}
\item[(iii)] there exists a positive integer $N_0$ such that every point in $\mathbb{D}$ belongs to at most $N_0$ of the sets $D(a_j,4r)$. 
\end{itemize}

\subsection{Test Functions}

Recall the following pointwise estimate for derivatives of functions in $A_{\alpha}^p(\mathbb{D})$, see for example \cite[Lemma 2.1]{17}.

\begin{lemma}\label{lemma2.2}
Let $\alpha>-1$, $0<p<\infty$, $r>0$ and $n$ be an non-negative integer. There exists a constant $C=C(\alpha, p,r,n)>0$ such that
\begin{equation*}
|f^{(n)}(z)|^p\leq \frac{C}{(1-|z|^2)^{2+\alpha+np}}\int_{D(z,r)}|f(w)|^p{\rm d}A_{\alpha}(w)
\end{equation*}
for all $f\in A_{\alpha}^p(\mathbb{D})$ and $z\in\mathbb{D}$.
\end{lemma}

Note from Lemma \ref{lemma2.2} that each point evaluation of $n$th order: $f\mapsto f^{(n)}(z)$, is a continuous linear functional on $A_{\alpha}^p(\mathbb{D})$ for every $z\in\mathbb{D}$. Thus when $p=2$, by the Riesz representation theorem in Hilbert space theory, to each $z\in\mathbb{D}$ corresponds a unique function $K_z^{[n]}\in A_{\alpha}^2(\mathbb{D})$ such that
\begin{equation}\label{equa2.5}
f^{(n)}(z)=\langle f,K_z^{[n]}\rangle, \quad \forall f\in A_{\alpha}^2(\mathbb{D}).
\end{equation} 
$K_z^{[n]}$ is called the reproducing kernel function in $A_{\alpha}^2(\mathbb{D})$ at $z$ of order $n$, whose explicit formula is known as 
\begin{equation*}
K_z(w)=K_z^{[0]}(w)=\frac{1}{(1-\overline{z}w)^{2+\alpha}},\quad w\in\mathbb{D},
\end{equation*}
and $K_z^{[n]}(w)=\frac{\partial ^nK_z}{\partial\overline{z}^n}(w)$. It is known that
\begin{equation}\label{equa2.6}
\|K_z^{[n]}\|_{A_{\alpha}^2}^2=(K_z^{[n]})^{(n)}(z)\simeq \frac{1}{(1-|z|^2)^{2+\alpha+2n}}.
\end{equation}
Let $N,i$ be non-negative integers such that $N>\frac{2+\alpha}{p}$. For any $a\in\mathbb{D}$, let
\begin{equation*}
F_{a,N}^{[i]}(z)=\frac{z^i}{(1-\overline{a}z)^{N+i}},\quad z\in\mathbb{D}.
\end{equation*} 
According to \cite[Lemma 4.26]{29} and Forelli-Rudin's estimate, see \cite[Theorem 1.12]{28}, we have
\begin{equation*}
\|F_{a,N}^{[i]}\|_{A_{\alpha}^p}\simeq (1-|a|^2)^{\frac{2+\alpha}{p}-N-i}\quad {\rm and}\quad  \|F_{a,N}^{[i]}\|_{H^p}\simeq (1-|a|^2)^{\frac{1}{p}-N-i}.
\end{equation*}
Constants suppressed above are independent of $a$. We set 
\begin{equation*}
f_{a,N,p}^{[i]}(z)=(1-|a|^2)^{N+i-\frac{2+\alpha}{p}}F_{a,N}^{[i]}(z)
\end{equation*}
and
\begin{equation*}
g_{a,N,p}^{[i]}(z)=(1-|a|^2)^{N+i-\frac{1}{p}}F_{a,N}^{[i]}(z).
\end{equation*}
Then $\|f_{a,N,p}^{[i]}\|_{A_{\alpha}^p}\simeq 1$, $\|g_{a,N,p}^{[i]}\|_{H^p}\simeq 1$ and both $f_{a,N,p}^{[i]}$ and $g_{a,N,p}^{[i]}$ converges to 0 uniformly on compact subsets of $\mathbb{D}$ as $|a|\to 1^-$.

Besides, we have the following result, which derives from \cite[Theorem 4.33]{29}.

\begin{lemma}\label{lemma2.3}
Let $\{a_j\}$ be an $r$-lattice in the Bergman metric. Suppose $\alpha>-1$, $0<p<\infty$ and $N,i$ be non-negative integers with $N>\max\{1,\frac{1}{p}\}+\frac{1+\alpha}{p}$. For any $\mathbf{c}=\{c_j\}\in l^p$, let
\begin{equation*}
H_{\mathbf{c}, N,p}^{[i]}(z)=\sum_{j=1}^{\infty}c_j\frac{(1-|a_j|^2)^{N+i-\frac{2+\alpha}{p}}z^i}{(1-\overline{a}_jz)^{N+i}},\quad z\in\mathbb{D}.
\end{equation*}
Then $H_{\mathbf{c},N,p}^{[i]}\in A_{\alpha}^p(\mathbb{D})$ and $\|H_{\mathbf{c}, N,p}^{[i]}\|_{A_{\alpha}^p}\lesssim \|\{c_j\}\|_{l^p}$.
\end{lemma}

\subsection{Compact operator and Schatten-$p$ class}

Given two complete metrizable topological vector spaces $X$ and $Y$, a bounded linear operator $T: X\to Y$ is said to be compact if the image of each bounded sequence in $X$ has a convergent subsequence in $Y$.

The following compactness criterion derives from \cite[Lemma 2.1]{5}, see also \cite[Proposition 3.11]{9}.

\begin{lemma}\label{lemma2.4}
Let $\alpha\geq -1$ and $0<p,q<\infty$. Let $T$ be a linear combination of weighted composition operators and assume $T: A_{\alpha}^p(\mathbb{D})\to A_{\alpha}^q(\mathbb{D})$ is bounded. Then $T: A_{\alpha}^p(\mathbb{D})\to A_{\alpha}^q(\mathbb{D})$ is compact if and only if $\|Tf_n\|_{A_{\alpha}^q}\to 0$ for any bounded sequence in $A_{\alpha}^p(\mathbb{D})$ that converges to 0 uniformly on compact subsets of $\mathbb{D}$.
\end{lemma}

For any $z\in\mathbb{D}$, let $k_z^{[n]}=K_z^{[n]}/\|K_z^{[n]}\|_{A_{\alpha}^2}$. Suppose $T$ is a bounded linear operator on $A_{\alpha}^2$, we define a function 
\begin{equation*}
\widetilde{T}_{[n]}(z)=\langle Tk_z^{[n]}, k_z^{[n]}\rangle,\quad z\in\mathbb{D}.
\end{equation*}
$\widetilde{T}_{[n]}$ is called the $n$th-Berezin transform of $T$. When $n=0$, it reduces to the classcial Berezin transform.

\begin{lemma}\label{lemma2.5}
Suppose $T$ is a positive operator on $A_{\alpha}^2(\mathbb{D})$. If $T$ belongs to the trace class, i.e. $T\in S_1(A_{\alpha}^2)$, then $\widetilde{T}_{[n]}\in L^1(\mathbb{D},{\rm d}\lambda)$.
\end{lemma}

\begin{proof}
Fix an orthonormal basis $\{e_j\}$ for $A_{\alpha}^2(\mathbb{D})$. Since $T$ is positive, it belongs to the trace class if and only if
$$\sum_{j=1}^{\infty}\langle Te_j,e_j\rangle<\infty.$$
On the other hand, let $S=\sqrt{T}$. By \cite[Theorem 4.28]{29}, the reproducing formula \eqref{equa2.5} and \eqref{equa2.6}, we have
\begin{align*}
\|T\|_{S_1}&=\sum_{j=1}^{\infty}\langle Te_j,e_j\rangle=\sum_{j=1}^{\infty}\|Se_j\|_{A_{\alpha}^2}^2\\
&\gtrsim \sum_{j=1}^{\infty}\int_{\mathbb{D}}|(Se_j)^{(n)}(z)|^2(1-|z|^2)^{2n}{\rm d}A_{\alpha}(z)\\
&=\int_{\mathbb{D}}\left(\sum_{j=1}^{\infty}\langle Se_j, K_z^{[n]} \rangle^2\right)(1-|z|^2)^{2n}{\rm d}A_{\alpha}(z)\\
&=\int_{\mathbb{D}}\left(\sum_{j=1}^{\infty}\langle e_j, SK_z^{[n]}\rangle ^2\right)(1-|z|^2)^{2n}{\rm d}A_{\alpha}(z)\\
&=\int_{\mathbb{D}}\|SK_z^{[n]}\|_{A_{\alpha}^2}^2(1-|z|^2)^{2n}{\rm d}A_{\alpha}(z)\\
&=\int_{\mathbb{D}}\langle TK_z^{[n]}, K_z^{[n]}\rangle (1-|z|^2)^{2n}{\rm d}A_{\alpha}(z)\\
&\simeq \int_{\mathbb{D}}\langle Tk_z^{[n]}, k_z^{[n]}\rangle {\rm d}\lambda(z)=\int_{\mathbb{D}}\widetilde{T}_{[n]}(z){\rm d}{\lambda}(z).
\end{align*}
Therefore, $\int_{\mathbb{D}}\widetilde{T}_{[n]}(z){\rm d}\lambda(z)<\infty$. The proof is complete.
\end{proof}

\begin{lemma}\label{lemma2.6}
Suppose $T$ is a positive operator on $A_{\alpha}^2(\mathbb{D})$ and $1\leq p< \infty$. If $T\in S_p(A_{\alpha}^2)$, then $\widetilde{T}_{[n]}\in L^p(\mathbb{D},{\rm d}\lambda)$.
\end{lemma}

\begin{proof}
For $1\leq p<\infty$, by \cite[Proposition 1.31]{29}, we have
\[(\widetilde{T^p})_{[n]}(z)\geq \left(\widetilde{T}_{[n]}(z)\right)^p,\quad z\in\mathbb{D}.\]
Since $T$ is positive, $T\in S_p(A_{\alpha}^2)$ implies that $T^p$ is in the trace class. So it follows from Lemma \ref{lemma2.5} that
\begin{equation*}
\int_{\mathbb{D}}\left(\widetilde{T}_{[n]}(z)\right)^p{\rm d}\lambda(z)\leq\int_{\mathbb{D}}(\widetilde{T^p})_{[n]}(z){\rm d}\lambda(z)\lesssim \|T^p\|_{S_1}=\|T\|_{S_p}<\infty.
\end{equation*}
The proof is complete.
\end{proof}

\subsection{Carleson Measure}

Let $\mu$ be a positive Borel measure on $\mathbb{D}$. For $\alpha>-1$ and $0<p,q<\infty$, we say that $\mu$ is a $(p,q)$-Bergman Carleson measure if there exsits a constant $C>0$ such that
\begin{equation*}
\left(\int_{\mathbb{D}}|f(z)|^q{\rm d}\mu(z)\right)^{1/q}\leq C\|f\|_{A_{\alpha}^p}
\end{equation*}
for all $f\in A_{\alpha}^p(\mathbb{D})$. That is, $\mu$ is a $(p,q)$-Bergman Carleson measure if the embedding $A_{\alpha}^p(\mathbb{D})\subset L^q(\mathbb{D}, {\rm d}\mu)$ is continuous. If, in addition, the embedding $A_{\alpha}^p(\mathbb{D})\subset L^q(\mathbb{D},{\rm d}\mu)$ is compact, then $\mu$ is called a vanishing $(p,q)$-Bergman Carleson measure. Note that $(p,q)$-Bergman Carleson measures are finite.

For $r>0$ and $t>0$, we put
\[M_{r,t}(\mu)(z)=\frac{\mu(D(z,r))}{(1-|z|^2)^{t(2+\alpha)}},\quad z\in\mathbb{D}\]
for the weighted averaging function of $\mu$. When $t=1$, write $M_r(\mu):=M_{r,1}(\mu)$ for simplicity.

The geometric characterizations for $(p,q)$-Bergman Carleson measures have been known for some time, see \cite{18} and we state the results as follows.

\begin{lemma}\label{lemma2.7}
Let $0<p,q<\infty$ and $\mu$ be a positive Borel measure on $\mathbb{D}$.
\begin{itemize}
\item[(i)] If $0<p\leq q<\infty$, then $\mu$ is a $(p,q)$-Bergman Carleson measure if and only if 
$$\sup_{z\in\mathbb{D}}M_{r,\frac{q}{p}}(z)<\infty$$
for some (or any) $r>0$, and $\mu$ is a vanishing $(p,q)$-Bergman Carleson measure if and only if
$$\lim_{|z|\to 1^-}M_{r,\frac{q}{p}}(z)=0$$
for some (or any) $r>0$.
\item[(ii)] If $0<q<p<\infty$, then $\mu$ is a $(p,q)$-Bergman Carleson measure if and only if it is a vanishing $(p,q)$-Bergman Carleson measure if and only if
$$M_r(\mu)\in L^{\frac{p}{p-q}}(\mathbb{D},{\rm d}A_{\alpha})$$
for some (or any) $r>0$.
\end{itemize}
\end{lemma}

When $p=q$, we just call $\mu$ is a (vanishing) Bergman Carleson measure for simplicity, since the characterizations for $(p,p)$-Bergman Carleson measures are independent of $p$.

When $\mu$ is a Bergman Carleson measure and $r>0$, by Lemma \ref{lemma2.2} and Fubini's Theorem, it is easy to see that
\begin{equation}\label{equa2.7}
\int_{\mathbb{D}}|f(z)|^p{\rm d}\mu(z)\lesssim \int_{\mathbb{D}}|f(w)|^pM_r(\mu)(w){\rm d}A_{\alpha}(w)
\end{equation} 
for $0<p<\infty$ and any $f\in H(\mathbb{D})$. 

Given a positive Borel measure $\mu$ on $\mathbb{D}$, the Toeplitz operator $T_{\mu}$ acting on $A_{\alpha}^2(\mathbb{D})$ is the densely defined integral operator
\[T_{\mu}f(z)=\int_{\mathbb{D}}\frac{f(w)}{(1-z\overline{w})^{2+\alpha}}{\rm d}\mu(w),\quad z\in\mathbb{D}.\]
Clearly, $T_{\mu}$ is well defined on $H^{\infty}(\mathbb{D})$, the space of all bounded analytic functions on $\mathbb{D}$.

It is known that $T_{\mu}$ is bounded (compact, resp.) on $A_{\alpha}^2(\mathbb{D})$ if and only if $\mu$ is a (vanishing, resp.) Bergman Carleson measure. And for $0<p<\infty$, according to \cite[Theorem 7.18]{29}, $T_{\mu}$ is in $S_p(A_{\alpha}^2)$ if and only if 
\begin{equation}\label{equa2.8}
M_r(\mu)\in L^p(\mathbb{D},{\rm d}\lambda)
\end{equation}
for some (or any) $r>0$.

\begin{lemma}\label{lemma2.8}
Suppose $\mu$ is a vanishing Bergman Carleson measure and $R>r>0$. Let ${\rm d}\nu=M_r(\mu){\rm d}A_{\alpha}$. If $M_R(\mu)\in L^p(\mathbb{D},{\rm d}\lambda)$, then $T_\nu$ is in $S_p(A_{\alpha}^2)$.
\end{lemma}

\begin{proof}
By \eqref{equa2.3}, it is easy to see that $M_{R-r}(\nu)\lesssim M_R(\mu)$. So $M_{R-r}(\nu)\in L^{p}(\mathbb{D},{\rm d}\lambda)$, and then $T_{\nu}\in S_p(A_{\alpha}^2)$.
\end{proof}

\section{New proof of Theorems 1.1 and 1.2}

In this section, we present a new and simpler proof of $(i)\Rightarrow(ii)$ in Theorem \ref{theorem1.1} and $(i)\Rightarrow(iii)$ in Theorem \ref{theorem1.2}, which characterize the necessity for the boundedness and compactness of the difference $C_{u,\varphi}-C_{v,\psi}$ from $A_{\alpha}^p(\mathbb{D})$ to $A_{\alpha}^q(\mathbb{D})$.

\begin{proof}[{\bf New Proof of $(i)\Rightarrow(ii)$ in Theorem 1.1}]
Assume $p\leq q$ and $C_{u,\varphi}-C_{v,\psi}$ is bounded from $A_{\alpha}^p(\mathbb{D})$ to $A_{\alpha}^q(\mathbb{D})$. Then
\begin{equation*}
\sup_{a\in\mathbb{D}}\|(C_{u,\varphi}-C_{v,\psi})f_{a,N,p}^{[i]}\|_{A_{\alpha}^q}<\infty, \quad i=0,1.
\end{equation*}
Fix $r>0$. By \eqref{equa2.3}, we get
\begin{align*}
&\|(C_{u,\varphi}-C_{v,\psi})f_{a,N,p}^{[0]}\|_{A_{\alpha}^q}^q\\
&\quad \geq \int_{\varphi^{-1}(D(a,r))}\left|\frac{u(z)}{(1-\overline{a}\varphi(z))^N}-\frac{v(z)}{(1-\overline{a}\psi(z))^N}\right|^q(1-|a|^2)^{(N-\frac{2+\alpha}{p})q}{\rm d}A_{\alpha}(z)\\
&\quad \gtrsim\int_{\varphi^{-1}(D(a,r))}\left|u(z)-v(z)\left(\frac{1-\overline{a}\varphi(z)}{1-\overline{a}\psi(z)}\right)^N\right|^q\frac{1}{(1-|a|^2)^{\frac{q}{p}(2+\alpha)}}{\rm d}A_{\alpha}(z),
\end{align*}
and similarly,
\begin{align*}
&\|(C_{u,\varphi}-C_{v,\psi})f_{a,N,p}^{[1]}\|_{A_{\alpha}^q}^q\\
&\quad \gtrsim \frac{\int_{\varphi^{-1}(D(a,r))}\left|u(z)\varphi(z)-v(z)\psi(z)\left(\frac{1-\overline{a}\varphi(z)}{1-\overline{a}\psi(z)}\right)^{N+1}\right|^q{\rm d}A_{\alpha}(z)}{(1-|a|^2)^{\frac{q}{p}(\alpha+2)}}.
\end{align*}
Therefore,
\begin{equation}\label{equa3.1}
\sup_{a\in\mathbb{D}}\frac{\int_{\varphi^{-1}(D(a,r))}\left|u(z)-v(z)\left(\frac{1-\overline{a}\varphi(z)}{1-\overline{a}\psi(z)}\right)^N\right|^q{\rm d}A_{\alpha}(z)}{(1-|a|^2)^{\frac{q}{p}(\alpha+2)}}<\infty,
\end{equation}
and
\begin{equation}\label{equa3.2}
\sup_{a\in\mathbb{D}}\frac{\int_{\varphi^{-1}(D(a,r))}\left|u(z)\varphi(z)-v(z)\psi(z)\left(\frac{1-\overline{a}\varphi(z)}{1-\overline{a}\psi(z)}\right)^{N+1}\right|^q{\rm d}A_{\alpha}(z)}{(1-|a|^2)^{\frac{q}{p}(\alpha+2)}}<\infty.
\end{equation}
By \eqref{equa2.3}, $\left|\frac{1-\overline{a}\varphi(z)}{1-\overline{a}\psi(z)}\right|\lesssim 1$ when $\varphi(z)\in D(a,r)$. Multiplying the integrand in \eqref{equa3.1} by $|\psi(z)|^q\left|\frac{1-\overline{a}\varphi(z)}{1-\overline{a}\psi(z)}\right|^q$, adding it to \eqref{equa3.2}, and by the triangle inequality, we obtain
\begin{equation}\label{equa3.3}
\sup_{a\in\mathbb{D}}\frac{\int_{\varphi^{-1}(D(a,r))}|u(z)|^q\left|\frac{\varphi(z)-\psi(z)}{1-\overline{a}\psi(z)}\right|^q{\rm d}A_{\alpha}(z)}{(1-|a|^2)^{\frac{q}{p}(2+\alpha)}}<\infty.
\end{equation}
By \eqref{equa2.4}, $|1-\overline{a}\psi(z)|\simeq |1-\overline{\varphi(z)}\psi(z)|$ when $\varphi(z)\in D(a,r)$. So it follows from \eqref{equa3.3} that
\begin{align*}
\sup_{a\in\mathbb{D}}\frac{\int_{\varphi^{-1}(D(a,r))}|u(z)|^q\rho(z)^q{\rm d}A_{\alpha}(z)}{(1-|a|^2)^{\frac{q}{p}(\alpha+2)}}<\infty.
\end{align*}
This shows that $\omega_{\varphi,u}^q$ is a $(p,q)$-Bergman Carleson measure by Lemma \ref{lemma2.7}. Similarly, $\omega_{\psi,v}^q$ is also a $(p,q)$-Bergman Carleson measure.

On the other hand, when $\varphi(z)\in D(a,r)$, we have
\begin{equation}\label{equa3.4}
\begin{split}
&|u(z)-v(z)|\left|\frac{1-\overline{a}\varphi(z)}{1-\overline{a}\psi(z)}\right|^N\\
&\quad \leq \left|u(z)-v(z)\left(\frac{1-\overline{a}\varphi(z)}{1-\overline{a}\psi(z)}\right)^N\right|+|u(z)|\left|1-\left(\frac{1-\overline{a}\varphi(z)}{1-\overline{a}\psi(z)}\right)^N\right|\\
&\quad \lesssim \left|u(z)-v(z)\left(\frac{1-\overline{a}\varphi(z)}{1-\overline{a}\psi(z)}\right)^N\right|+|u(z)|\left|\frac{\varphi(z)-\psi(z)}{1-\overline{a}\psi(z)}\right|
\end{split}
\end{equation}
And by \eqref{equa2.1}, \eqref{equa2.3} and \eqref{equa2.4}, we have
\begin{equation}\label{equa3.5}
\left|\frac{1-\overline{a}\varphi(z)}{1-\overline{a}\psi(z)}\right|\gtrsim \frac{(1-|\varphi(z)|^2)(1-|\psi(z)|^2)}{|1-\overline{\varphi(z)}\psi(z)|^2}=1-\rho^2(z)
\end{equation}
for $z\in \varphi^{-1}(D(a,r))$. So we combine \eqref{equa3.1} and \eqref{equa3.3} to obtain
\begin{equation*}
\sup_{a\in\mathbb{D}}\frac{\int_{\varphi^{-1}(D(a,r))}|u(z)-v(z)|^q(1-\rho(z))^{qN}{\rm d}A_{\alpha}(z)}{(1-|a|^2)^{\frac{q}{p}(2+\alpha)}}<\infty.
\end{equation*}
This means that $\sigma_{\varphi,\beta}^q$ is a $(p,q)$-Bergman Carleson measure for $\beta\geq qN>\frac{q}{p}(2+\alpha)$ by Lemma \ref{lemma2.7}. Similarly, $\sigma_{\psi,\beta}^q$ is also a $(p,q)$-Bergman Carleson measure.

Finally, the proof for the compactness part is just a modification. By Lemma \ref{lemma2.4}, one only needs to write ``$\lim\limits_{|a|\to 1}$'' instead of ``$\sup\limits_{a\in\mathbb{D}}$'' and write ``$\to 0$'' instead of ``$<\infty$'', respectively. We omit the routine details.
\end{proof}

According to the above proof, we conclude the following corollary, which coincides with \cite[Theorem 1.1]{3}.

\begin{corollary}
Let $\alpha>-1$, $0<p\leq q<\infty$ and $N>\frac{2+\alpha}{p}$. Suppose $u,v\in H(\mathbb{D})$ and $\varphi,\psi\in S(\mathbb{D})$. Then
\begin{itemize}
\item[(i)] $C_{u,\varphi}-C_{v,\psi}: A_{\alpha}^p(\mathbb{D})\to A_{\alpha}^q(\mathbb{D})$ is bounded if and only if 
\begin{equation*}
\sup_{a\in\mathbb{D}}\|(C_{u,\varphi}-C_{v,\psi})f_{a,N,p}^{[i]}\|_{A_{\alpha}^q}<\infty, \quad i\in \mathbb{N}.
\end{equation*}
\item[(ii)] 
$C_{u,\varphi}-C_{v,\psi}: A_{\alpha}^p(\mathbb{D})\to A_{\alpha}^q(\mathbb{D})$ is compact if and only if 
\begin{equation*}
\lim_{|a|\to 1}\|(C_{u,\varphi}-C_{v,\psi})f_{a,N,p}^{[i]}\|_{A_{\alpha}^q}=0, \quad i\in \mathbb{N}.
\end{equation*}
\end{itemize}
\end{corollary}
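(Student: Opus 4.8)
The plan is to deduce Corollary~\ref{corollary3.1} from the work already carried out for Theorem~\ref{theorem1.1}. The necessity (``only if'') directions are immediate: since $\|f_{a,N,p}^{[i]}\|_{A_\alpha^p}\simeq 1$ uniformly in $a\in\mathbb{D}$, boundedness of $C_{u,\varphi}-C_{v,\psi}:A_\alpha^p(\mathbb{D})\to A_\alpha^q(\mathbb{D})$ gives $\sup_{a\in\mathbb{D}}\|(C_{u,\varphi}-C_{v,\psi})f_{a,N,p}^{[i]}\|_{A_\alpha^q}<\infty$ for every $i$; and since the family $\{f_{a,N,p}^{[i]}\}$ is bounded in $A_\alpha^p(\mathbb{D})$ and tends to $0$ uniformly on compact subsets of $\mathbb{D}$ as $|a|\to 1^-$, Lemma~\ref{lemma2.04} turns compactness of $C_{u,\varphi}-C_{v,\psi}$ into $\|(C_{u,\varphi}-C_{v,\psi})f_{a,N,p}^{[i]}\|_{A_\alpha^q}\to 0$. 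So only the ``if'' directions need an argument.

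For sufficiency, the point I would stress is that the proof of $(i)\Rightarrow(ii)$ in Theorem~\ref{theorem1.1} never used full boundedness of $C_{u,\varphi}-C_{v,\psi}$, but only the finiteness of $\sup_{a\in\mathbb{D}}\|(C_{u,\varphi}-C_{v,\psi})f_{a,N,p}^{[i]}\|_{A_\alpha^q}$ for $i=0$ and $i=1$, which is a special case of the hypothesis in part~(i). Running that argument verbatim therefore shows that $\omega_{\varphi,u}^{\alpha,q}$, $\omega_{\psi,v}^{\alpha,q}$, $\sigma_{\varphi,u,v}^{\alpha,q,\beta}$ and $\sigma_{\psi,u,v}^{\alpha,q,\beta}$ are $(p,q)$-Bergman Carleson measures with $\beta=qN>\tfrac{q}{p}(2+\alpha)$, and, after replacing ``$\sup_a$'' by ``$\lim_{|a|\to1^-}$'' and ``$<\infty$'' by ``$\to0$'', the hypothesis of part~(ii) makes these four measures compact $(p,q)$-Bergman Carleson measures. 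It then remains to prove that these Carleson conditions force $C_{u,\varphi}-C_{v,\psi}:A_\alpha^p(\mathbb{D})\to A_\alpha^q(\mathbb{D})$ to be bounded, respectively compact. I would do this by the usual splitting: write
\[
(C_{u,\varphi}-C_{v,\psi})f=(u-v)(f\circ\varphi)+v(f\circ\varphi-f\circ\psi),
\]
decompose $\mathbb{D}$ into $\{\rho\le\tfrac{1}{2}\}$ and $\{\rho>\tfrac{1}{2}\}$, use $\rho\gtrsim 1$ on the latter region (together with \eqref{equa2.1}--\eqref{equa2.4} to interchange $\varphi$ and $\psi$) to dominate the relevant integrals by the $\omega$-measures, and on the former region invoke Lemma~\ref{lemma2.01} to bound $|f\circ\varphi-f\circ\psi|^q$ by $\rho^q$ times a local $A_\alpha^q$-average of $f$, with $(1-\rho)^\beta\simeq 1$ there letting the $\sigma$-measures handle the term $(u-v)(f\circ\varphi)$; Fubini's theorem and the Carleson estimates then finish the boundedness, and Lemma~\ref{lemma2.04} the compactness.

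The one point worth checking carefully, and which I expect to be the main obstacle, is that this last (``easy'') direction really does go through without the hypothesis $u,v\in A_\alpha^q(\mathbb{D})$ that was imposed in Theorem~\ref{theorem1.1}: concretely, one notices that a $(p,q)$-Bergman Carleson measure is automatically finite, so testing against $f\equiv 1$ already yields $\int(1-\rho)^\beta|u-v|^q\,dA_\alpha<\infty$ and $\int\rho^q(|u|^q+|v|^q)\,dA_\alpha<\infty$, whence $u-v\in A_\alpha^q(\mathbb{D})$ by splitting on $\{\rho\le\tfrac{1}{2}\}$ and $\{\rho>\tfrac{1}{2}\}$ — and this is all the integrability of the weights that the splitting argument actually uses. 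One should also record that $\beta=qN$ is admissible, which is clear since $N>\tfrac{2+\alpha}{p}$ gives $qN>\tfrac{q}{p}(2+\alpha)$. Apart from these observations the proof is a direct reuse of estimates already established for Theorem~\ref{theorem1.1}, so no new technical lemmas are needed.
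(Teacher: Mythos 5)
Your proposal is correct in substance and follows the same route as the paper: the paper obtains the corollary precisely by observing that its proof of (i)$\Rightarrow$(ii) in Theorem~\ref{theorem1.1} uses nothing beyond the finiteness (resp.\ decay as $|a|\to 1^-$) of $\|(C_{u,\varphi}-C_{v,\psi})f_{a,N,p}^{[i]}\|_{A_{\alpha}^q}$ for $i=0,1$, while the remaining implications are the trivial testing direction and the known sufficiency of the four Carleson conditions. The paper disposes of that sufficiency by noting the statement coincides with \cite[Theorem 1.1]{CbCkKhPi}, where the $L^q$-assumption on the weights has been removed, whereas you re-prove it by the standard splitting; your concern is well placed, since Theorem~\ref{theorem1.1} as stated carries the hypothesis $u,v\in A_{\alpha}^q(\mathbb{D})$ and the corollary does not, and your observation that finiteness of the four measures already yields $u-v\in A_{\alpha}^q(\mathbb{D})$ (which is all the integrability one ever needs) is the right remark to make this self-contained.

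One detail of your sufficiency sketch does not work as written. On $\{\rho>\tfrac12\}$ you keep the decomposition $(u-v)(f\circ\varphi)+v(f\circ\varphi-f\circ\psi)$ and propose to use \eqref{equa2.1}--\eqref{equa2.4} to ``interchange $\varphi$ and $\psi$''; but those comparisons are only available when $d(\varphi(z),\psi(z))$ stays below a fixed $s<1$, i.e.\ on the complementary region, so the crossed term $|v|^q|f\circ\varphi|^q$ produced on $\{\rho>\tfrac12\}$ is not controlled by any of the four measures in this way (it pairs $v$ with $\varphi$). The repair is immediate: on $\{\rho>\tfrac12\}$ do not rearrange at all, but estimate $|u\,f\circ\varphi-v\,f\circ\psi|\le |u||f\circ\varphi|+|v||f\circ\psi|\lesssim \rho|u||f\circ\varphi|+\rho|v||f\circ\psi|$, so that only $\omega_{\varphi,u}^{\alpha,q}$ and $\omega_{\psi,v}^{\alpha,q}$ appear, and reserve the rearranged splitting together with Lemma~\ref{lemma2.01} for $\{\rho\le\tfrac12\}$, exactly as you do there. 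With this correction (and the standard pointwise step via Lemma~\ref{lemma2.02} needed to pass from the local $A_{\alpha}^q$-averages to the $(p,q)$-Carleson conditions when $p<q$, plus Lemma~\ref{lemma2.04} for the compact case), your argument is complete and delivers the corollary in the stated generality.
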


Now we proceed to the proof of $(i)\Rightarrow (iii)$ in Theorem \ref{theorem1.2}. Before that, we recall some basic facts about the Khinchine's inequality, which is an important tool in complex and functional analysis. An introduction to this topic can be found in Appendix of \cite{12}.

Let $\{r_j(t)\}$ denotes the sequence of Rademacher functions defined by
$$r_0(t)=\begin{cases}1, &\mbox{if}\ 0\leq t-[t]<\frac{1}{2}\\-1, &\mbox{if}\ \frac{1}{2}\leq t-[t]<1,\end{cases}$$ 
where $[t]$ denotes the largest integer not greater than $t$ and $r_j(t)=r_0(2^jt)$ for $j=1, 2,\cdots$.
If $0<p<\infty$, then Khinchine's inequality states that
$$\left(\sum_{j}\left|c_{j}\right|^{2}\right)^{p / 2} \simeq \int_{0}^{1}\left|\sum_{j} c_{j} r_{j}(t)\right|^{p} {\rm d}t$$
for complex sequences $\left\{c_{j}\right\}$

\begin{proof}[{\bf New proof of $(i)\Rightarrow (iii)$ in Theorem 1.2}]
Assume $0<q<p<\infty$ and $C_{u,\varphi}-C_{v,\psi}: A_{\alpha}^p(\mathbb{D})\to A_{\alpha}^q(\mathbb{D})$ is bounded. Fix $r>0$. Let $\{a_j\}$ be an $r$-lattice in the Bergman metric and $\mathbf{c}=\{c_j\}\in l^p$. For $N>\max\{1,\frac{1}{p}\}+\frac{1+\alpha}{p}$, we use the test function $H_{\mathbf{c},N,p}^{[i]}$ defined in Lemma \ref{lemma2.3} to obtain
\begin{equation*}
\|(C_{u,\varphi}-C_{v,\psi})H_{\mathbf{c},N,p}^{[i]}\|_{A_{\alpha}^q}\lesssim \|\{c_j\}\|_{l^p}, \quad i=0,1.
\end{equation*}
For $i=0$, we obtain
\begin{equation}\label{equa3.6}
\begin{split}
&\int_{\mathbb{D}}\left|\sum_{j=1}^{\infty}c_j\left(u(z)\frac{(1-|a_j|^2)^{N-\frac{2+\alpha}{p}}}{(1-\overline{a}_j\varphi(z))^N}-v(z)\frac{(1-|a_j|^2)^{N-\frac{2+\alpha}{p}}}{(1-\overline{a}_j\psi(z))^N}\right)\right|^q{\rm d}A_{\alpha}(z)\\
&\quad \lesssim \|\{c_j\}\|_{l^p}^q.
\end{split}
\end{equation}
In \eqref{equa3.6}, we replace $c_j$ by $r_j(t)c_j$, so that the right hand side does not change. Then intergrating both sides with respect to $t$ from 0 to 1, and by Khinchine's inequaltiy and Fubini's Theorem, we obtain
\begin{equation*}
\begin{split}
&\int_{\mathbb{D}}\left(\sum_{j=1}^{\infty}|c_j|^2\left|u(z)\frac{(1-|a_j|^2)^{N-\frac{2+\alpha}{p}}}{(1-\overline{a}_j\varphi(z))^N}-v(z)\frac{(1-|a_j|^2)^{N-\frac{2+\alpha}{p}}}{(1-\overline{a}_j\psi(z))^N}\right|^2\right)^{\frac{q}{2}}{\rm d}A_{\alpha}(z)\\
&\simeq\int_{\mathbb{D}}\int_0^1\left|\sum_{j=1}^{\infty}c_jr_j(t)\left(u(z)\frac{(1-|a_j|^2)^{N-\frac{2+\alpha}{p}}}{(1-\overline{a}_j\varphi(z))^N}-v(z)\frac{(1-|a_j|^2)^{N-\frac{2+\alpha}{p}}}{(1-\overline{a}_j\psi(z))^N}\right)\right|^q{\rm d}t{\rm d}A_{\alpha}(z)\\
&=\int_{0}^1\int_{\mathbb{D}}\left|\sum_{j=1}^{\infty}c_jr_j(t)\left(u(z)\frac{(1-|a_j|^2)^{N-\frac{2+\alpha}{p}}}{(1-\overline{a}_j\varphi(z))^N}-v(z)\frac{(1-|a_j|^2)^{N-\frac{2+\alpha}{p}}}{(1-\overline{a}_j\psi(z))^N}\right)\right|^q{\rm d}A_{\alpha}(z){\rm d}t\\
&\lesssim \|\{c_j\}\|_{l^p}^q.
\end{split}
\end{equation*}
By \eqref{equa2.3}, $\chi_{D(a_j,2r)}(\varphi(z))\lesssim \frac{1-|a_j|^2}{|1-\overline{a}_j\varphi(z)|}$. It follows that
\begin{equation*}
\begin{split}
\int_{\mathbb{D}}\left(\sum_{j=1}^{\infty}\frac{|c_j|^2\chi_{D(a_j,2r)}(\varphi(z))\left|u(z)-v(z)\left(\frac{1-\overline{a}_j\varphi(z)}{1-\overline{a}_j\psi(z)}\right)^N\right|^2}{(1-|a_j|^2)^{2(2+\alpha)/p}}\right)^{q/2}{\rm d}A_{\alpha(z)} \lesssim \|\{c_j\}\|_{l^p}^q.
\end{split}
\end{equation*}
Recall that there is a positive integer $N_0$ such that each point $z\in\mathbb{D}$ belongs to at most $N_0$ of the disks $D(a_j,2r)$. Applying Minkowski's inequality if $\frac{2}{q}\leq 1$ and H\"older's inequality if $\frac{2}{q}>1$, we obtain
\begin{equation*}
\begin{split}
&\sum_{j=1}^{\infty}|c_j|^q\frac{\int_{\varphi^{-1}(D(a_j,2r))}\left|u(z)-v(z)\left(\frac{1-\overline{a}_j\varphi(z)}{1-\overline{a}_j\psi(z)}\right)^N\right|^q{\rm d}A_{\alpha}(z)}{(1-|a_j|^2)^{q(2+\alpha)/p}}\\
&\quad\leq C\int_{\mathbb{D}}\left(\sum_{j=1}^{\infty}\frac{|c_j|^2\chi_{D(a_j,2r)}(\varphi(z))\left|u(z)-v(z)\left(\frac{1-\overline{a}\varphi(z)}{1-\overline{a}\psi(z)}\right)^N\right|^2}{(1-|a_j|^2)^{2(2+\alpha)/p}}\right)^{q/2}{\rm d}A_{\alpha(z)}\\
&\quad \lesssim \|\{c_j\}\|_{l^p}^q,
\end{split}
\end{equation*}
where $C=\max\{N_0^{1-\frac{q}{2}},1\}$. This implies that the sequence 
\[\left\{\frac{\int_{\varphi^{-1}(D(a_j,2r))}\left|u(z)-v(z)\left(\frac{1-\overline{a}_j\varphi(z)}{1-\overline{a}_j\psi(z)}\right)^N\right|^q{\rm d}A_{\alpha}(z)}{(1-|a_j|^2)^{q(2+\alpha)/p}}\right\}_{j=1}^{\infty}\]
belongs to the dual of $l^{\frac{p}{q}}$ or, equivalently,
\begin{equation}\label{equa3.7}
\sum_{j=1}^{\infty}\left(\frac{\int_{\varphi^{-1}(D(a_j,2r))}\left|u(z)-v(z)\left(\frac{1-\overline{a}_j\varphi(z)}{1-\overline{a}_j\psi(z)}\right)^N\right|^q{\rm d}A_{\alpha}(z)}{(1-|a_j|^2)^{q(2+\alpha)/p}}\right)^{\frac{p}{p-q}}<\infty.
\end{equation}
Similarly, taking $i=1$, we obtain
\begin{equation}\label{equa3.8}
\sum_{j=1}^{\infty}\left(\frac{\int_{\varphi^{-1}(D(a_j,2r))}\left|u(z)\varphi(z)-v(z)\psi(z)\left(\frac{1-\overline{a}_j\varphi(z)}{1-\overline{a}_j\psi(z)}\right)^{N+1}\right|^qdA_{\alpha}(z)}{(1-|a_j|^2)^{q(2+\alpha)/p}}\right)^{\frac{p}{p-q}}<\infty.
\end{equation}
By \eqref{equa2.3}, $\left|\frac{1-\overline{a}_j\varphi(z)}{1-\overline{a}_j\psi(z)}\right|\lesssim 1$ when $\varphi(z)\in D(a,r)$ and constant suppressed is independent of $a_j$, multiplying the integrand in \eqref{equa3.7} by $|\psi(z)|^q\left|\frac{1-\overline{a_j}\varphi(z)}{1-\overline{a}_j\psi(z)}\right|^q$, adding it to \eqref{equa3.8} and by the triangle inequality and \eqref{equa2.4}, we get
\begin{equation}\label{equa3.9}
\sum_{j=1}^{\infty}\left(\frac{\int_{\varphi^{-1}(D(a_j,2r))}|u(z)|^q\rho(z)^q{\rm d}A_{\alpha}(z)}{(1-|a_j|^2)^{q(2+\alpha)/p}}\right)^{\frac{p}{p-q}}<\infty.
\end{equation}
Therefore, by \eqref{equa2.2} and \eqref{equa2.3}, we obtain
\begin{equation}\label{equa3.10}
\begin{split}
&\int_{\mathbb{D}}\left(\frac{\int_{\varphi^{-1}(D(w,r))}|u(z)|^q\rho(z)^q{\rm d}A_{\alpha}(z)}{(1-|w|^2)^{2+\alpha}}\right)^{\frac{p}{p-q}}{\rm d}A_{\alpha}(w)\\
&\quad \leq \sum_{j=1}^{\infty}\int_{D(a_j,r)}\left(\frac{\int_{\varphi^{-1}(D(w,r))}|u(z)|^q\rho(z)^q{\rm d}A_{\alpha}(z)}{(1-|w|^2)^{2+\alpha}}\right)^{\frac{p}{p-q}}{\rm d}A_{\alpha}(w)\\
&\quad \lesssim \sum_{j=1}^{\infty}\left(\frac{\int_{\varphi^{-1}(D(a_j,2r))}|u(z)|^q\rho(z)^q{\rm d}A_{\alpha}(z)}{(1-|a_j|^2)^{2+\alpha}}\right)^{\frac{p}{p-q}}A_{\alpha}(D(a_j,2r))\\
&\quad \lesssim \sum_{j=1}^{\infty}\left(\frac{\int_{\varphi^{-1}(D(a_j,2r))}|u(z)|^q\rho(z)^q{\rm d}A_{\alpha}(z)}{(1-|a_j|^2)^{q(2+\alpha)/p}}\right)^{\frac{p}{p-q}}<\infty.
\end{split}
\end{equation}
This implies that $\omega_{\varphi,u}^q$ is a $(p,q)$-Bergman Carleson measure by Lemma \ref{lemma2.7}. Similarly, $\omega_{\psi,v}^q$ is also a $(p,q)$-Bergman Carleson measure.

On the other hand, when $\varphi(z)\in D(a_j,2r)$, by \eqref{equa3.4}, \eqref{equa3.5} and \eqref{equa2.4}, we have
\begin{equation*}
\begin{split}
|u(z)-v(z)|(1-\rho(z))^N&\lesssim |u(z)-v(z)|\left|\frac{1-\overline{a}_j\varphi(z)}{1-\overline{a}_j\psi(z)}\right|^N\\
&\lesssim \left|u(z)-v(z)\left(\frac{1-\overline{a}_j\varphi(z)}{1-\overline{a}_j\psi(z)}\right)^N\right|+|u(z)|\rho(z).
\end{split}
\end{equation*}
So we combine \eqref{equa3.7} and \eqref{equa3.9} to obtain
\begin{equation*}
\sum_{j=1}^{\infty}\left(\frac{\int_{\varphi^{-1}(D(a_j,2r))}|u(z)-v(z)|^q(1-\rho(z))^{qN}{\rm d}A_{\alpha}(z)}{(1-|a_j|^2)^{q(2+\alpha)/p}}\right)^{\frac{p}{p-q}}<\infty.
\end{equation*}
Then through a similar argument as \eqref{equa3.10}, we get
\begin{equation*}
\int_{\mathbb{D}}\left(\frac{\int_{\varphi^{-1}(D(w,r))}|u(z)-v(z)|^q(1-\rho(z))^{qN}{\rm d}A_{\alpha}(z)}{(1-|w|^2)^{2+\alpha}}\right)^{\frac{p}{p-q}}{\rm d}A_{\alpha}(w)<\infty.
\end{equation*}
This implies that $\sigma_{\varphi,\beta}^q$ is a $(p,q)$-Bergman Carleson measure for $\beta\geq qN>\frac{q}{p}(\alpha+1+\max\{1,p\})$ by Lemma \ref{lemma2.7}. Similarly, $\sigma_{\psi,\beta}^q$ is also a $(p,q)$-Bergman Carleson measure. The proof is now complete.
\end{proof}

According to the above proof, we conclude the following corollary.

\begin{corollary}\label{corollary3.2}
Let $\alpha>-1$, $0<q<p<\infty$ and $N>\max\{1,\frac{1}{p}\}+\frac{1+\alpha}{p}$. Suppose $u,v\in H(\mathbb{D})$ and $\varphi,\psi\in S(\mathbb{D})$. Then $C_{u,\varphi}-C_{v,\psi}: A_{\alpha}^p(\mathbb{D})\to A_{\alpha}^q(\mathbb{D})$ is bounded if and only it is compact, if and only if
\[\|(C_{u,\varphi}-C_{v,\psi})H_{{\mathbf{c}},N,p}^{[i]}\|_{A_{\alpha}^q}\lesssim \|\{c_j\}\|_{l^p},\quad \forall i\in\mathbb{N}\]
for any $\mathbf{c}=\{c_j\}\in l^p$.
\end{corollary}

\section{Proof of Theorem 1.3}

In this section, we aim to prove Theorem \ref{theorem1.3}, which characterizes the Schatten class difference of weighted composition operators on $A_{\alpha}^2(\mathbb{D})$. And then we obtain some direct corollaries.

\begin{proof}[{\bf Proof of Theorem 1.3}]
{\it Sufficiency}: Fix $r>0$, set 
$$G_r=\{z\in\mathbb{D}:\beta(\varphi(z),\psi(z))<r\}.$$
For any $f\in A_{\alpha}^2(\mathbb{D})$, we have
\begin{equation*}
\begin{split}
\langle (C_{u,\varphi}-C_{v,\psi})^*(C_{u,\varphi}-C_{v,\psi})f,f\rangle&=\|(C_{u,\varphi}-C_{v,\psi})f\|_{A_{\alpha}^2}^2\\
&=\int_{\mathbb{D}\backslash G_r}+\int_{G_r}|u\cdot f\circ\varphi-v\cdot f\circ\psi|^2{\rm d}A_{\alpha}\\
&:=I(f)+II(f).
\end{split}
\end{equation*}
Now we estimate each integral separately. For the integral $I(f)$, note that $\rho(z)\geq \tanh(r)$ for $z\in \mathbb{D}\backslash G_r$. So by \eqref{equa2.7}, we obtain
\begin{equation}\label{equa4.1}
\begin{split}
I(f)&\lesssim \int_{\mathbb{D}\backslash G_r}\left(|u\cdot f\circ\varphi|^2+|v\cdot f\circ\psi|^2\right){\rm d}A_{\alpha}\\
&\lesssim \int_{\mathbb{D}}\left(|u\rho|^2|f\circ\varphi|^2+|v\rho|^2|f\circ\psi|^2\right){\rm d}A_{\alpha}\\
&\lesssim \int_{\mathbb{D}}|f|^2M_{2r}(\omega_2){\rm d}A_{\alpha}.
\end{split}
\end{equation}
Meanwhile, clearly,
\begin{align*}
II(f)&\lesssim \int_{G_r}|u-v|^2(|f\circ\varphi|^2+|f\circ\psi|^2)+(|u|^2+|v|^2)|f\circ\varphi-f\circ\psi|^2{\rm d}A_{\alpha}\\
&:=II_1(f)+II_2(f).
\end{align*}
For the integral $II_1(f)$, note that $1-\rho(z)\geq 1-\tanh(r)$ for $z\in G_r$, so by \eqref{equa2.7}, we obtain
\begin{equation}\label{equa4.2}
\begin{split}
II_1(f)&\lesssim \int_{G_r}|u-v|^2(1-\rho)^{\beta}(|f\circ\varphi|^2+|f\circ\psi|^2){\rm d}A_{\alpha}\\
&\lesssim \int_{\mathbb{D}}|f|^2M_{2r}(\sigma_{\beta,2}){\rm d}A_{\alpha}.
\end{split}
\end{equation}
For the integral $II_2(f)$, applying Lemma \ref{lemma2.1}, when $z\in G_r$, we have
\begin{equation*}
|f(\varphi(z))-f(\psi(z))|^2\lesssim \frac{\rho(z)^2}{(1-|\varphi(z)|^2)^{2+\alpha}}\int_{D(\varphi(z),2r)}|f(w)|^2{\rm d}A_{\alpha}(w),
\end{equation*}
and
\begin{equation*}
|f(\varphi(z))-f(\psi(z))|^2\lesssim \frac{\rho(z)^2}{(1-|\psi(z)|^2)^{2+\alpha}}\int_{D(\psi(z),2r)}|f(w)|^2{\rm d}A_{\alpha}(w).
\end{equation*}
Then we use Fubini's Theorem to obtain
\begin{equation}\label{equa4.3}
\begin{split}
II_2(f)&\lesssim\int_{G_r}\frac{|u(z)|^2\rho(z)^2}{(1-|\varphi(z)|^2)^{2+\alpha}}\left(\int_{D(\varphi(z),2r)}|f(w)|^2{\rm d}A_{\alpha}(w)\right){\rm d}A_{\alpha}(z)\\
&\quad +\int_{G_r}\frac{|v(z)|^2\rho(z)^2}{(1-|\psi(z)|^2)^{2+\alpha}}\left(\int_{D(\psi(z),2r)}|f(w)|^2{\rm d}A_{\alpha}(w)\right){\rm d}A_{\alpha}(z)\\
&\lesssim \int_{\mathbb{D}}|f|^2M_{2r}(\omega_2){\rm d}A_{\alpha}.
\end{split}
\end{equation}

Let ${\rm d}\mu=(M_{2r}(\omega_2)+M_{2r}(\sigma_{\beta,2})){\rm d}A_{\alpha}$. Combining \eqref{equa4.1}, \eqref{equa4.2} and \eqref{equa4.3}, we obtain
\[(C_{u,\varphi}-C_{v,\psi})^*(C_{u,\varphi}-C_{v,\psi})\lesssim T_{\mu}.\]
By Lemma \ref{lemma2.8}, the assumption implies that $T_{\mu}\in S_{\frac{p}{2}}(A_{\alpha}^2)$. It follows that $(C_{u,\varphi}-C_{v,\psi})^*(C_{u,\varphi}-C_{v,\psi})\in S_{\frac{p}{2}}(A_{\alpha}^2)$, and then $C_{u,\varphi-}C_{v,\psi}\in S_p(A_{\alpha}^2)$.

{\it Necessity}: Assume $C_{u,\varphi}-C_{v,\psi}\in S_p(A_{\alpha}^2)$ and $p\geq 2$. Then $(C_{u,\varphi}-C_{v,\psi})^*(C_{u,\varphi}-C_{v,\psi})\in S_{\frac{p}{2}}(A_{\alpha}^2)$. And it follows from Lemma \ref{lemma2.6} that the function
\[z\mapsto \|(C_{u,\varphi}-C_{v,\psi})k_z^{[n]}\|_{A_{\alpha}^2}^2\]
belongs to $L^{\frac{p}{2}}(\mathbb{D},{\rm d}\lambda)$.

For any $r>0$, by \eqref{equa2.3} and \eqref{equa2.6}, we have
\begin{equation}\label{equa4.4}
\begin{split}
&\|(C_{u,\varphi}-C_{v,\psi})k_z^{[0]}\|_{A_{\alpha}^2}^2\\
&\gtrsim\int_{\varphi^{-1}(D(z,r))}\left|\frac{u(w)}{(1-\overline{z}\varphi(w))^{2+\alpha}}-
\frac{v(w)}{(1-\overline{z}\psi(w))^{2+\alpha}}\right|^2(1-|z|^2)^{2+\alpha}{\rm d}A_{\alpha}(w)\\
&\gtrsim \frac{\int_{\varphi^{-1}(D(z,r))}\left|u(w)-v(w)\frac{(1-\overline{z}\varphi(w))^{2+\alpha}}{(1-\overline{z}\psi(w))^{2+\alpha}}\right|^2{\rm d}A_{\alpha}(w)}{(1-|z|^2)^{2+\alpha}}.
\end{split}
\end{equation}
Similarly,
\begin{equation}\label{equa4.5}
\begin{split}
&\|(C_{u,\varphi}-C_{v,\psi})k_z^{[1]}\|_{A_{\alpha}^2}^2\\
&\gtrsim\frac{\int_{\varphi^{-1}(D(z,r))}\left|u(w)\varphi(w)-v(w)\psi(w)\frac{(1-\overline{z}\varphi(w))^{3+\alpha}}{(1-\overline{z}\psi(w))^{3+\alpha}}\right|^2{\rm d}A_{\alpha}(w)}{(1-|z|^2)^{2+\alpha}}.
\end{split}
\end{equation}
Since $\left|\frac{1-\overline{z}\varphi(w)}{1-\overline{z}\psi(w)}\right|\lesssim 1$ when $w\in \varphi^{-1}(D(z,r))$, multiplying the integrand in \eqref{equa4.4} by $|\psi(w)|^2\left|\frac{1-\overline{z}\varphi(w)}{1-\overline{z}\psi(w)}\right|^2$, and adding it to \eqref{equa4.5}, by the triangle inequality and \eqref{equa2.4}, we obtain
\begin{equation}\label{equa4.6}
\begin{split}
&\|(C_{u,\varphi}-C_{v,\psi})k_z^{[0]}\|_{A_{\alpha}^2}^2+\|(C_{u,\varphi}-C_{v,\psi})k_z^{[1]}\|_{A_{\alpha}^2}^2\\
&\quad \gtrsim\frac{\int_{\varphi^{-1}(D(z,r))}|u(w)|^2\rho(w)^2{\rm d}A_{\alpha}(w)}{(1-|z|^2)^{2+\alpha}}=M_r(\omega_{\varphi,u}^2)(z).
\end{split}
\end{equation}
Thus $M_r(\omega_{\varphi,u}^2)\in L^{\frac{p}{2}}(\mathbb{D},{\rm d}\lambda)$. Similarly, $M_r(\omega_{\psi,v}^2)\in L^{\frac{p}{2}}(\mathbb{D},{\rm d}\lambda)$.

On the other hand, when $\varphi(w)\in D(z,r)$, by \eqref{equa2.4}, we have
\begin{equation*}
\begin{split}
&\left|1-\frac{(1-\overline{z}\varphi(w))^{\alpha+2}}{(1-\overline{z}\psi(w))^{\alpha+2}}\right|\\
&\quad\lesssim \sup_{0\leq t\leq 1}\frac{|1-\overline{z}(t\varphi(w)+(1-t)\psi(w))|^{\alpha+1}}{|1-\overline{z}\psi(w)|^{\alpha+1}}\frac{|\varphi(w)-\psi(w)|}{|1-\overline{z}\psi(w)|}\lesssim \rho(z,w).
\end{split}
\end{equation*}
It follows that when $\varphi(w)\in D(z,r)$, 
\begin{align*}
&|u(w)-v(w)|(1-\rho(w))^{2+\alpha}\\
&\quad\lesssim |u(w)-v(w)|\left|\frac{1-\overline{z}\varphi(w)}{1-\overline{z}\psi(w)}\right|^{\alpha+2}\\
&\quad \leq \left|u(w)-v(w)\frac{(1-\overline{z}\varphi(w))^{\alpha+2}}{(1-\overline{z}\psi(w))^{\alpha+2}}\right|+|u(w)|\left|1-\frac{(1-\overline{z}\varphi(w))^{\alpha+2}}{(1-\overline{z}\psi(w))^{\alpha+2}}\right|\\
&\quad \lesssim \left|u(w)-v(w)\frac{(1-\overline{z}\varphi(w))^{\alpha+2}}{(1-\overline{z}\psi(w))^{\alpha+2}}\right|+|u(w)|\rho(w).
\end{align*}
So when $\beta\geq 2(\alpha+2)$ we combine \eqref{equa4.4} and \eqref{equa4.6} to obtain
\begin{equation*}
\begin{split}
&\|(C_{u,\varphi}-C_{v,\psi})k_z^{[0]}\|_{A_{\alpha}^2}^2+\|(C_{u,\varphi}-C_{v,\psi})k_z^{[1]}\|_{A_{\alpha}^2}^2\\
&\quad \gtrsim\frac{\int_{\varphi^{-1}(D(z,r))}|u(w)-v(w)|^2(1-\rho(w))^{2(2+\alpha)}{\rm d}A_{\alpha}(w)}{(1-|z|^2)^{2+\alpha}}\geq M_r(\sigma_{\varphi,\beta}^2)(z).
\end{split}
\end{equation*}
Consequently, $M_r(\sigma_{\varphi,\beta}^2)\in L^{\frac{p}{2}}(\mathbb{D},{\rm d}\lambda)$. Similarly, $M_r(\sigma_{\psi,\beta}^2)\in L^{\frac{p}{2}}(\mathbb{D},{\rm d}\lambda)$. The proof is complete.
\end{proof}

\begin{remark}\label{remark4.1}
According to Theorem \ref{theorem1.3}, $C_{u,\varphi}-C_{v,\psi}$ is Hilbert-Schmidt on $A_{\alpha}^2(\mathbb{D})$ if and only if
\begin{itemize}
\item[(a)] $M_r(\omega_{2})$ and $M_r(\sigma_{\beta,2})$ belong to $L^1(\mathbb{D},{\rm d}\lambda)$ for some (or any) $r>0$. Here, $\beta=2(2+\alpha)$.
\end{itemize}
This is equivalent to \cite[Theorem 2]{1}, i.e. $C_{u,\varphi}-C_{v,\psi}$ is Hilbert-Schmidt on $A_{\alpha}^2(\mathbb{D})$ if and only if 
\begin{itemize}
\item[(b)] the functions $\frac{|\rho u|^2}{(1-|\varphi|^2)^{2+\alpha}}$, $\frac{|\rho v|^2}{(1-|\psi|^2)^{2+\alpha}}$, $\frac{(1-\rho)^{2+\alpha}|u-v|^2}{(1-|\varphi|^2)^{2+\alpha}}$ and $\frac{(1-\rho)^{2+\alpha}|u-v|^2}{(1-|\psi|^2)^{2+\alpha}}$ belong to $L^1(\mathbb{D},{\rm d}A_{\alpha})$.
\end{itemize}

In fact, by \eqref{equa2.3} and Fubini's Theorem, 
\begin{equation}\label{equa4.7}
\begin{split}
&\int_{\mathbb{D}}M_r(\omega_2)(z){\rm d}\lambda(z)\\
&\quad =\int_{\mathbb{D}}\frac{\int_{\varphi^{-1}(D(z,r))}|u(w)|^2\rho(w)^2{\rm d}A_{\alpha}(w)}{(1-|z|^2)^{2+\alpha}}{\rm d}\lambda(z)\\
&\qquad \quad +\int_{\mathbb{D}}\frac{\int_{\psi^{-1}(D(z,r))}|v(w)|^2\rho(w)^2{\rm d}A_{\alpha}(w)}{(1-|z|^2)^{2+\alpha}}{\rm d}\lambda(z)\\
&\quad =\int_{\mathbb{D}}|u(w)|^2\rho(w)^2\left(\int_{D(\varphi(w),r)}\frac{1}{(1-|z|^2)^{2+\alpha}}{\rm d}\lambda(z)\right){\rm d}A_{\alpha}(w)\\
&\qquad \quad +\int_{\mathbb{D}}|v(w)|^2\rho(w)^2\left(\int_{D(\psi(w),r)}\frac{1}{(1-|z|^2)^{2+\alpha}}{\rm d}\lambda(z)\right){\rm d}A_{\alpha}(w)\\
&\quad \simeq \int_{\mathbb{D}}\frac{|u(w)|^2\rho(w)^2}{(1-|\varphi(w)|^2)^{2+\alpha}}+\frac{|v(w)|^2\rho(w)^2}{(1-|\psi(w)|^2)^{2+\alpha}}{\rm d}A_{\alpha}(w)
\end{split}
\end{equation}
Meanwhile, for $\beta=2(\alpha+2)$,
\begin{equation}\label{equa4.8}
\begin{split}
&\int_{\mathbb{D}}M_r(\sigma_{\beta,2}){\rm d}\lambda\\
&\quad \simeq\int_{\mathbb{D}}|u-v|^2(1-\rho)^{2(2+\alpha)}\left(\frac{1}{(1-|\varphi|^2)^{2+\alpha}}+\frac{1}{(1-|\psi|^2)^{2+\alpha}}\right){\rm d}A_{\alpha}\\
&\quad \leq\int_{\mathbb{D}}|u-v|^2(1-\rho)^{2+\alpha}\left(\frac{1}{(1-|\varphi|^2)^{2+\alpha}}+\frac{1}{(1-|\psi|^2)^{2+\alpha}}\right){\rm d}A_{\alpha}.
\end{split}
\end{equation}
On the other hand, for fixed $0<\delta<1$, let $E_{\delta}=\{z:\rho(z)<\delta\}$. We write
\begin{equation*}
\begin{split}
&\int_{\mathbb{D}}|u-v|^2(1-\rho)^{2+\alpha}\left(\frac{1}{(1-|\varphi|^2)^{2+\alpha}}+\frac{1}{(1-|\psi|^2)^{2+\alpha}}\right){\rm d}A_{\alpha}\\
&\quad =\int_{E_{\delta}}+\int_{\mathbb{D}\backslash E_{\delta}}|u-v|^2(1-\rho)^{2+\alpha}\left(\frac{1}{(1-|\varphi|^2)^{2+\alpha}}+\frac{1}{(1-|\psi|^2)^{2+\alpha}}\right){\rm d}A_{\alpha}.
\end{split}
\end{equation*}
Clearly, 
\begin{equation}\label{equa4.9}
\begin{split}
\int_{E_{\delta}}|u-v|^2(1-\rho)^{2+\alpha}\left(\frac{1}{(1-|\varphi|^2)^{2+\alpha}}+\frac{1}{(1-|\psi|^2)^{2+\alpha}}\right){\rm d}A_{\alpha} \lesssim \int_{\mathbb{D}}M_{r}(\sigma_{\beta,2}){\rm d}\lambda.
\end{split}
\end{equation}
And by \eqref{equa2.1}, we have
\begin{equation}\label{equa4.10}
\begin{split}
&\int_{\mathbb{D}\backslash E_{\delta}}|u-v|^2(1-\rho)^{2+\alpha}\left(\frac{1}{(1-|\varphi|^2)^{2+\alpha}}+\frac{1}{(1-|\psi|^2)^{2+\alpha}}\right){\rm d}A_{\alpha}\\
&\quad =\int_{\mathbb{D}\backslash E_{\delta}}|u-v|^2\frac{(1-|\varphi|^2)^{2+\alpha}+(1-|\psi|^2)^{2+\alpha}}{|1-\overline{\varphi}\psi|^{2(2+\alpha)}}{\rm d}A_{\alpha}\\
&\quad \lesssim \int_{\mathbb{D}\backslash E_{\delta}}\frac{(|u|^2+|v|^2)\rho^2}{|1-\overline{\varphi}\psi|^{2+\alpha}}{\rm d}A_{\alpha}\lesssim \int_{\mathbb{D}}M_r(\omega_2){\rm d}\lambda.
\end{split}
\end{equation}
Combining \eqref{equa4.7}-\eqref{equa4.10}, the equivalence of (a) and (b) holds.
\end{remark}

From the proof of Theorem \ref{theorem1.2}, we conclude the following characterization for Schatten class difference $C_{u,\varphi}-C_{v,\psi}$ via Reproducing Kernel Thesis.

\begin{corollary}\label{corollary4.2}
Let $\alpha>-1$ and $p\geq 2$. Suppose $u,v\in H(\mathbb{D})$ and $\varphi,\psi\in S(\mathbb{D})$. Then $C_{u,\varphi}-C_{v,\psi}\in S_p(A_{\alpha}^2)$ if and only if 
\[\int_{\mathbb{D}}\|(C_{u,\varphi}-C_{v,\psi})k_z^{[n]}\|_{A_{\alpha}^2}^p{\rm d}\lambda(z)<\infty\]
for any $n\in\mathbb{N}$. 
\end{corollary}

\begin{remark}\label{remark3.3}
Let $v(z)=0$, when $p\geq 2$, by Theorem \ref{theorem1.3} and Corollary \ref{corollary4.2}, it is easy to see that $C_{u,\varphi}\in S_p(A_{\alpha}^2)$ if and only if
\begin{equation}\label{equa4.11}
\int_{\mathbb{D}}\left(\frac{\int_{\varphi^{-1}(D(z,r))}|u(w)|^2{\rm d}A_{\alpha}(w)}{(1-|z|^2)^{2+\alpha}}\right)^{\frac{p}{2}}{\rm d}\lambda(z)<\infty,
\end{equation}
if and only if
\begin{equation*}
\int_{\mathbb{D}}\|C_{u,\varphi}k_z^{[n]}\|_{A_{\alpha}^2}^p{\rm d}\lambda(z)<\infty,\quad \forall n\in\mathbb{N}.
\end{equation*}
In fact, an easy application of \eqref{equa2.8} shows that \eqref{equa4.11} holds for all $0<p<\infty$, since $C_{u,\varphi}^*C_{u,\varphi}=T_{\mu_{u,\varphi}}$, where $\mu_{u,\varphi}=(|u|^2dA_{\alpha})\circ\varphi^{-1}$.
\end{remark}

\begin{proposition}\label{proposition4.4}
Let $\alpha>-1$, $p\geq 2$ and $a,b\in\mathbb{C}\backslash\{0\}$. Suppose $C_{\varphi}$ and $C_{\psi}$ are not in $S_{p}(A_{\alpha}^2)$. Then $aC_{\varphi}+bC_{\psi}\in S_p(A_{\alpha}^2)$ if and only if $a+b=0$ and $C_{\varphi}-C_{\psi}\in S_p(A_{\alpha}^2)$.
\end{proposition}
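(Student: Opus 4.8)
The plan is to deduce everything from Theorem \ref{theorem1.3} applied to weighted composition operators with \emph{constant} weights. The backward implication is immediate: if $a+b=0$ then $b=-a$ and $aC_{\varphi}+bC_{\psi}=a(C_{\varphi}-C_{\psi})$, so (since $a\neq 0$) this operator lies in $S_p(A_{\alpha}^2)$ exactly when $C_{\varphi}-C_{\psi}$ does. So the content is the forward implication, which I would prove by contradiction: assuming $aC_{\varphi}+bC_{\psi}\in S_p(A_{\alpha}^2)$, I will first show $a+b=0$ must hold, and then the previous line finishes the job.

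The key observation is that $aC_{\varphi}+bC_{\psi}=C_{a,\varphi}-C_{-b,\psi}$, i.e.\ it is a difference of two weighted composition operators with constant weights $u\equiv a$ and $v\equiv -b$. Fix $\beta=2(\alpha+2)$. Since $p\geq 2$, the necessity part of Theorem \ref{theorem1.3} applies and tells us that $M_r(\omega_{\varphi,a}^{\alpha,2})$, $M_r(\omega_{\psi,-b}^{\alpha,2})$, $M_r(\sigma_{\varphi,a,-b}^{\alpha,2,\beta})$ and $M_r(\sigma_{\psi,a,-b}^{\alpha,2,\beta})$ all belong to $L^{p/2}(\mathbb{D},d\lambda)$. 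Because the weights are constant and $|u-v|=|a+b|$, these measures collapse to
\[
\omega_{\varphi,a}^{\alpha,2}=|a|^2(\rho^2dA_{\alpha})\circ\varphi^{-1},\qquad
\sigma_{\varphi,a,-b}^{\alpha,2,\beta}=|a+b|^2\bigl((1-\rho)^{\beta}dA_{\alpha}\bigr)\circ\varphi^{-1},
\]
and similarly with $\psi$ in place of $\varphi$.

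Now suppose, for contradiction, that $a+b\neq 0$. Dividing out the nonzero constants $|a|^2$ and $|a+b|^2$ (and using that $M_r$ is linear in the measure), we get that both $M_r\bigl((\rho^2dA_{\alpha})\circ\varphi^{-1}\bigr)$ and $M_r\bigl(((1-\rho)^{\beta}dA_{\alpha})\circ\varphi^{-1}\bigr)$ lie in $L^{p/2}(\mathbb{D},d\lambda)$. The elementary fact that $t^2+(1-t)^{\beta}\geq c_{\beta}>0$ for $t\in[0,1]$ (the left side is a positive continuous function on a compact interval) gives $\rho(w)^2+(1-\rho(w))^{\beta}\gtrsim 1$, hence
\[
M_r\!\bigl((dA_{\alpha})\circ\varphi^{-1}\bigr)(z)=\frac{A_{\alpha}\bigl(\varphi^{-1}(D(z,r))\bigr)}{(1-|z|^2)^{2+\alpha}}\lesssim M_r\bigl((\rho^2dA_{\alpha})\circ\varphi^{-1}\bigr)(z)+M_r\bigl(((1-\rho)^{\beta}dA_{\alpha})\circ\varphi^{-1}\bigr)(z),
\]
so $M_r\bigl((dA_{\alpha})\circ\varphi^{-1}\bigr)\in L^{p/2}(\mathbb{D},d\lambda)$. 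Applying Theorem \ref{theorem1.3} once more, now with $v\equiv 0$ and with $\psi$ replaced by $\varphi$ (so that $\rho\equiv 0$ and all four associated measures reduce to $(dA_{\alpha})\circ\varphi^{-1}$ or to $0$), we conclude $C_{\varphi}=C_{1,\varphi}-C_{0,\varphi}\in S_p(A_{\alpha}^2)$, contradicting the hypothesis. (Alternatively one may run the same argument with $\psi$ and use $C_{\psi}\notin S_p$.) Therefore $a+b=0$, and then $aC_{\varphi}+bC_{\psi}=a(C_{\varphi}-C_{\psi})\in S_p(A_{\alpha}^2)$ forces $C_{\varphi}-C_{\psi}\in S_p(A_{\alpha}^2)$.

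I do not expect a genuine obstacle here: the statement is essentially a corollary of Theorem \ref{theorem1.3}, and the only things to get right are (i) recognizing $aC_{\varphi}+bC_{\psi}$ as the difference $C_{a,\varphi}-C_{-b,\psi}$ with constant weights, and (ii) the scalar inequality $t^2+(1-t)^{\beta}\gtrsim 1$ on $[0,1]$, which is what converts the Carleson/Schatten data of the difference into that of a single composition operator. The one point that deserves a careful check is the degenerate bookkeeping in the last step—verifying that taking $\psi=\varphi$, $v\equiv 0$ indeed makes $\rho$ vanish identically and collapses the four test measures in Theorem \ref{theorem1.3} to $(dA_{\alpha})\circ\varphi^{-1}$ and $0$, so that the hypothesis $\beta\geq 2(\alpha+2)$ causes no trouble.
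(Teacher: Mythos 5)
Your proposal is correct and follows essentially the same route as the paper: view $aC_{\varphi}+bC_{\psi}$ as $C_{a,\varphi}-C_{-b,\psi}$, apply the necessity part of Theorem \ref{theorem1.3}, use the pointwise bound $\rho^2+(1-\rho)^{2(\alpha+2)}\gtrsim 1$ to deduce $M_r\bigl((dA_{\alpha})\circ\varphi^{-1}\bigr)\in L^{p/2}(\mathbb{D},d\lambda)$ when $a+b\neq 0$, and conclude $C_{\varphi}\in S_p(A_{\alpha}^2)$, a contradiction. The only cosmetic difference is that the paper cites its Remark (equation (3.25), the $v\equiv 0$ specialization of Theorem \ref{theorem1.3} and Corollary \ref{corollary3.4}) for the last step, whereas you re-derive that specialization directly by taking $u\equiv 1$, $v\equiv 0$, $\psi=\varphi$ in Theorem \ref{theorem1.3}.
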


\begin{proof}
The sufficiency is trivial. We only need to prove the necessity.

Assume $aC_{\varphi}+bC_{\psi}\in S_p(A_{\alpha}^2)$. Fix $r>0$. By Theorem \ref{theorem1.3}, we have
\begin{equation}\label{equa4.12}
\int_{\mathbb{D}}\left(\frac{\int_{\varphi^{-1}(D(z,r))}\rho(w)^2{\rm d}A_{\alpha}(w)}{(1-|z|^2)^{2+\alpha}}\right)^{\frac{p}{2}}{\rm d}\lambda(z)<\infty,
\end{equation} 
and
\begin{equation}\label{equa4.13}
\int_{\mathbb{D}}\left(\frac{\int_{\varphi^{-1}(D(z,r))}(1-\rho(w))^{2(2+\alpha)}|a+b|^2{\rm d}A_{\alpha}(w)}{(1-|z|^2)^{2+\alpha}}\right)^{\frac{p}{2}}{\rm d}\lambda(z)<\infty.
\end{equation}
If $a+b\neq 0$, then adding \eqref{equa4.12} and \eqref{equa4.13}, we easily obtain
\[\int_{\mathbb{D}}\left(\frac{A_{\alpha}(\varphi^{-1}(D(z,r)))}{(1-|z|^2)^{\alpha+2}}\right)^{\frac{p}{2}}{\rm d}\lambda(z)<\infty.\]
By \eqref{equa4.11}, this shows that $C_{\varphi}\in S_p(A_{\alpha}^2)$, which is a contradiction. It follows that $a+b=0$ and $C_{\varphi}-C_{\psi}\in S_p(A_{\alpha}^2)$.
\end{proof}

\section{Proof of Theorem 1.4}

In this section, we aim to prove Theoremm \ref{theorem1.4}. To this end, we need the following lemma.

\begin{lemma}\label{lemma5.1}
Let $u\in H(\mathbb{D})$ and $\varphi\in S(\mathbb{D})$. Then $C_{u,\varphi}$ is bounded (compact, resp.) on $H^2(\mathbb{D})$ if and only if both $C_{u',\varphi}: H^2(\mathbb{D})\to A_1^2(\mathbb{D})$ and $C_{u\varphi',\varphi}: A_1^2(\mathbb{D})\to A_{1}^2(\mathbb{D})$ are bounded (compact, resp.).
\end{lemma}

\begin{proof}
By \eqref{equa1.1}, we know that $f\in H^2(\mathbb{D})$ if and only if $f'\in A_1^2(\mathbb{D})$. So the suffciency follows from the fact that 
\begin{align*}
\|(C_{u,\varphi}f)'\|_{A_1^2}^2&=\int_{\mathbb{D}}|u'(z)f(\varphi(z))+u(z)\varphi'(z)f'(\varphi(z))|^2{\rm d}A_1(z)\\
&\lesssim \|C_{u',\varphi}f\|_{A_1^2}^2+\|C_{u\varphi',\varphi}f'\|_{A_{1}^2}^2.
\end{align*}

Now we prove the necessity. Assume $C_{u,\varphi}$ is bounded on $H^2(\mathbb{D})$, we use the test function $g_{a,N,2}^{[i]}$ defined in Section 2.2 to obtain
$$\sup_{a\in\mathbb{D}}\|C_{u,\varphi}g_{a,N,2}^{[i]}\|_{H^2}<\infty,\quad i\in\mathbb{N}.$$
By \eqref{equa1.1}, taking $i=0$, we have
\begin{align*}
&\|C_{u,\varphi}g_{a,N,2}^{[0]}\|_{H^2}^2\\
&\quad \gtrsim \int_{\mathbb{D}}|(C_{u,\varphi}g_{a,N,2}^{[0]})'(z)|^2{\rm d}A_1(z)\\
&\quad =\int_{\mathbb{D}}\left|\frac{u'(z)}{(1-\overline{a}\varphi(z))^N}+\frac{u(z)\varphi'(z)N\overline{a}}{(1-\overline{a}\varphi(z))^{N+1}}\right|^2(1-|a|^2)^{2N-1}{\rm d}A_1(z).
\end{align*}
So we get
\begin{align}\label{equa5.1}
\sup_{a\in\mathbb{D}}\int_{\mathbb{D}}\left|\frac{u'(z)}{(1-\overline{a}\varphi(z))^N}+\frac{u(z)\varphi'(z)N\overline{a}}{(1-\overline{a}\varphi(z))^{N+1}}\right|^2(1-|a|^2)^{2N-1}{\rm d}A_1(z)<\infty.
\end{align}
Similarly, taking $i=1$, we get
\begin{align}\label{equa5.2}
\sup_{a\in\mathbb{D}}\int_{\mathbb{D}}\left|\frac{u'(z)\varphi(z)}{(1-\overline{a}\varphi(z))^{N+1}}+\frac{u(z)\varphi'(z)(1+N\overline{a}\varphi(z))}{(1-\overline{a}\varphi(z))^{N+2}}\right|^2(1-|a|^2)^{2N+1}{\rm d}A_1(z)<\infty.
\end{align}
Note that $\left|\frac{1-|a|^2}{1-\overline{a}\varphi(z)}\right|\lesssim 1$. Multiplying the integrand in \eqref{equa5.1} by $\frac{(1-|a|^2)^2}{|1-\overline{a}\varphi(z)|^2}|\varphi(z)|^2$, adding it to \eqref{equa5.2} and by the triangle inequality, we obtain
\begin{align}\label{equa5.3}
\sup_{a\in\mathbb{D}}\int_{\mathbb{D}}|u(z)\varphi'(z)|^2\frac{(1-|a|^2)^{2N+1}}{|1-\overline{a}\varphi(z)|^{2N+4}}{\rm d}A_1(z)<\infty.
\end{align}
Then it follows from \eqref{equa2.3} that
\begin{align}\label{equa5.4}
\sup_{a\in\mathbb{D}}\frac{\int_{\varphi^{-1}(D(a,r))}|u(z)\varphi'(z)|^2{\rm d}A_1(z)}{(1-|a|^2)^3}<\infty,
\end{align}
which means that $(|u\varphi'|{\rm d}A_1)\circ\varphi^{-1}$ is a Carleson measure for $A_1^2(\mathbb{D})$ by Lemma \ref{lemma2.7}. So $C_{u\varphi',\varphi}$ is bounded on $A_1^2(\mathbb{D})$.

Since \eqref{equa5.1} also holds replacing $N$ by $N+1$, combining \eqref{equa5.1} and \eqref{equa5.3}, we get
\begin{align}\label{equa5.5}
\sup_{a\in\mathbb{D}}\int_{\mathbb{D}}|u'(z)|^2\frac{(1-|a|^2)^{2N+1}}{|1-\overline{a}\varphi(z)|^{2N+2}}{\rm d}A_1(z)<\infty.
\end{align}
This shows that $(|u'|{\rm d}A_1)\circ\varphi^{-1}$ is a Hardy Carleson measure by Equation (1.1) in \cite{25}. It follows that $C_{u',\varphi}: H^2(\mathbb{D})\to A_1^2(\mathbb{D})$ is bounded.

The proof for the compactness part is just a modification. We omit the routine details.
\end{proof}

We are now ready to prove Theorem \ref{theorem1.4}.

\begin{proof}[{\bf Proof of Theorem 1.4}]
The sufficiency is trivial, since for $f\in H^2(\mathbb{D})$, we have $f'\in A_1^2(\mathbb{D})$ and 
\begin{align*}
\left\|\left[(C_{u,\varphi}-C_{v,\psi})f\right]'\right\|_{A_1^2}\lesssim \|(C_{u',\varphi}-C_{v',\psi})f\|_{A_1^2}+\|(C_{u\varphi',\varphi}-C_{v\psi',\psi})f'\|_{A_1^2}.
\end{align*}

Now we prove the necessity. Assume both $C_{u,\varphi}$ and $C_{v,\psi}$ are bounded on $H^2(\mathbb{D})$ and $C_{u,\varphi}-C_{v,\psi}$ is compact on $H^2(\mathbb{D})$. By the triangle inequality, for $f\in H^2(\mathbb{D})$, we have
\begin{align*}
\|(C_{u',\varphi}-C_{v',\psi})f\|_{A_1^2}\lesssim \|(C_{u,\varphi}-C_{v,\psi})f\|_{H^2}+\|(C_{u\varphi',\varphi}-C_{v\psi',\psi})f'\|_{A_1^2}.
\end{align*}
Thus we only need to prove the compactness of $C_{u\varphi',\varphi}-C_{v\psi',\psi}$ on $A_1^2(\mathbb{D})$.

Recall the test function 
$$g_{a,N,2}^{[i]}(z)=\frac{(1-|a|^2)^{N+i-\frac{1}{2}}z^i}{(1-\overline{a}z)^{N+i}},\quad z\in\mathbb{D}.$$
By Lemma \ref{lemma2.4}, the compactness of $C_{u,\varphi}-C_{v,\psi}$ on $H^2(\mathbb{D})$ implies that
\begin{align*}
\lim_{|a|\to 1}\|(C_{u,\varphi}-C_{v,\psi})g_{a,N,2}^{[i]}\|_{H^2}=0,\quad i\in\mathbb{N}.
\end{align*}
By \eqref{equa1.1}, taking $i=0$, we have
\begin{align*}
&\|(C_{u,\varphi}-C_{v,\psi})g_{a,N,2}^{[0]}\|_{H^2}^2\\
&\quad \gtrsim \int_{\mathbb{D}}\left|\left[(C_{u,\varphi}-C_{v,\psi})g_{a,N,2}^{[0]}\right]'(z)\right|^2{\rm d}A_1(z)\\
&\quad =\int_{\mathbb{D}}\left|\frac{u'(z)}{(1-\overline{a}\varphi(z))^{N}}+\frac{u(z)\varphi'(z)N\overline{a}}{(1-\overline{a}\varphi(z))^{N+1}}\right.\\
&\quad\qquad\qquad \left.-\frac{v'(z)}{(1-\overline{a}\psi(z))^{N}}-\frac{v(z)\psi'(z)N\overline{a}}{(1-\overline{a}\psi(z))^{N+1}}\right|^2(1-|a|^2)^{2N-1}{\rm d}A_{1}(z)
\end{align*}
Fix $r>0$. Then by \eqref{equa2.3}, we have
\begin{align*}
&\|(C_{u,\varphi}-C_{v,\psi})g_{a,N,2}^{[0]}\|_{H^2}^2\\
&\quad \gtrsim\frac{\int_{\varphi^{-1}(D(a,r))}\left|\left(u'(z)+\frac{u(z)\varphi'(z)N\overline{a}}{1-\overline{a}\varphi(z)}\right)-\left(v'(z)+\frac{v(z)\psi'(z)N\overline{a}}{1-\overline{a}\psi(z)}\right)\left(\frac{1-\overline{a}\varphi(z)}{1-\overline{a}\psi(z)}\right)^N\right|^2{\rm d}A_1(z)}{(1-|a|^2)}.
\end{align*}
It follows that
\begin{align}\label{equa5.6}
&\lim_{|a|\to 1}\int\limits_{\varphi^{-1}(D(a,r))}\left|\left(v'(z)+\frac{v(z)\psi'(z)N\overline{a}}{1-\overline{a}\psi(z)}\right)\left(\frac{1-\overline{a}\varphi(z)}{1-\overline{a}\psi(z)}\right)^N\right.\nonumber \\
&\qquad\qquad\qquad\qquad\left.-\left(u'(z)+\frac{u(z)\varphi'(z)N\overline{a}}{1-\overline{a}\varphi(z)}\right)\right|^2\frac{{\rm d}A_1(z)}{(1-|a|^2)}=0
\end{align}
Similarly, taking $i=1$ and $i=2$, respectively, we obtain
\begin{align}\label{equa5.7}
&\lim_{|a|\to 1}\int\limits_{\varphi^{-1}(D(a,r))}\left|\left(v'(z)\psi(z)+\frac{v(z)\psi'(z)(1+N\overline{a}\psi(z))}{1-\overline{a}\psi(z)}\right)\left(\frac{1-\overline{a}\varphi(z)}{1-\overline{a}\psi(z)}\right)^{N+1}\right.\nonumber\\
&\qquad\qquad\qquad\left.-\left(u'(z)\varphi(z)+\frac{u(z)\varphi'(z)(1+N\overline{a}\varphi(z))}{1-\overline{a}\varphi(z)}\right)\right|^2\frac{{\rm d}A_1(z)}{1-|a|^2}=0,
\end{align}
and 
\begin{align}\label{equa5.8}
&\lim_{|a|\to 1}\int\limits_{\varphi^{-1}(D(a,r))}\left|\left(v'(z)\psi(z)^2+\frac{v(z)\psi(z)(2\psi(z)+N\overline{a}\psi(z)^2)}{1-\overline{a}\psi(z)}\right)\left(\frac{1-\overline{a}\varphi(z)}{1-\overline{a}\psi(z)}\right)^{N+2}\right.\nonumber\\
&\qquad\qquad\left.-\left(u'(z)\varphi(z)^2+\frac{u(z)\varphi'(z)(2\varphi(z)+N\overline{a}\varphi(z)^2)}{1-\overline{a}\varphi(z)}\right)\right|^2\frac{{\rm d}A_1(z)}{1-|a|^2}=0.
\end{align}

By \eqref{equa2.3}, $\left|\frac{1-\overline{a}\varphi(z)}{1-\overline{a}\psi(z)}\right|\lesssim 1$ when $\varphi(z)\in D(a,r)$. Multiplying the integrand in \eqref{equa5.6} by $\left|\frac{1-\overline{a}\varphi(z)}{1-\overline{a}\psi(z)}\right|^2|\psi(z)|^2$, adding it to \eqref{equa5.7} and by the triangle inequality, we obtain
\begin{align}\label{equa5.9}
&\lim_{|a|\to 1}\int\limits_{\varphi^{-1}(D(a,r))}\left|\frac{v(z)\psi'(z)}{1-\overline{a}\psi(z)}\left(\frac{1-\overline{a}\varphi(z)}{1-\overline{a}\psi(z)}\right)^{N+1}-\frac{u(z)\varphi'(z)}{1-\overline{a}\varphi(z)}\right.\nonumber\\
&\qquad\qquad\left.-\left(u'(z)+\frac{u(z)\varphi'(z)N\overline{a}}{1-\overline{a}\varphi(z)}\right)\frac{\varphi(z)-\psi(z)}{1-\overline{a}\psi(z)}\right|^2\frac{{\rm d}A_1(z)}{1-|a|^2}=0.
\end{align}
Similarly, multiplying the integrand in \eqref{equa5.7} by $\left|\frac{1-\overline{a}\varphi(z)}{1-\overline{a}\psi(z)}\right|^2|\psi(z)|^2$, adding it to \eqref{equa5.8} and by the triangle inequality, we obtain
\begin{align}\label{equa5.10}
&\lim_{|a|\to 1}\int\limits_{\varphi^{-1}(D(a,r))}\left|\frac{v(z)\psi'(z)\psi(z)}{1-\overline{a}\psi(z)}\left(\frac{1-\overline{a}\varphi(z)}{1-\overline{a}\psi(z)}\right)^{N+2}-\frac{u(z)\varphi'(z)\varphi(z)}{1-\overline{a}\varphi(z)}\right.\nonumber\\
&\qquad\qquad\left.-\left(u'(z)\varphi(z)+\frac{u(z)\varphi'(z)(1+N\overline{a}\varphi(z))}{1-\overline{a}\varphi(z)}\right)\frac{\varphi(z)-\psi(z)}{1-\overline{a}\psi(z)}\right|^2\frac{{\rm d}A_1(z)}{1-|a|^2}=0.
\end{align}
Again, multiplying the integrand in \eqref{equa5.9} by $\left|\frac{1-\overline{a}\varphi(z)}{1-\overline{a}\psi(z)}\right|^2|\psi(z)|^2$, adding it to \eqref{equa5.10} and by the triangle inequality, we obtain
\begin{align}\label{equa5.11}
&\lim_{|a|\to 1}\int\limits_{\varphi^{-1}(D(a,r))}\left|2\frac{u(z)\varphi'(z)}{1-\overline{a}\varphi(z)}\frac{\varphi(z)-\psi(z)}{1-\overline{a}\psi(z)}\right.\nonumber\\
&\qquad\qquad\left.+\left(u'(z)+\frac{u(z)\varphi'(z)N\overline{a}}{1-\overline{a}\varphi(z)}\right)\left(\frac{\varphi(z)-\psi(z)}{1-\overline{a}\psi(z)}\right)^2\right|^2\frac{{\rm d}A_1(z)}{1-|a|^2}=0.
\end{align}
Since \eqref{equa5.11} also holds replacing $N$ by $N+1$, we get 
\begin{align*}
\lim_{|a|\to 1}\int_{\varphi^{-1}(D(a,r))}\left|\frac{u(z)\varphi'(z)}{1-\overline{a}\varphi(z)}\right|^2\left|\frac{\varphi(z)-\psi(z)}{1-\overline{a}\psi(z)}\right|^4\frac{{\rm d}A_1(z)}{1-|a|^2}=0.
\end{align*}
Then it follows from \eqref{equa2.3} and \eqref{equa2.4} that
\begin{align}\label{equa5.12}
\lim_{|a|\to 1}\frac{\int_{\varphi^{-1}(D(a,r))}|u(z)\varphi'(z)|^2\rho(z)^4{\rm d}A_1(z)}{(1-|a|^2)^3}=0.
\end{align}
Since $C_{u,\varphi}$ is bounded on $H^2(\mathbb{D})$, combining \eqref{equa5.12} and \eqref{equa5.3}, we obtain
\begin{align}\label{equa5.13}
\lim_{|a|\to 1}\frac{\int_{\varphi^{-1}(D(a,r))}|u(z)\varphi'(z)|^2\rho(z)^2{\rm d}A_1(z)}{(1-|a|^2)^3}=0.
\end{align}
This shows that $(|u\varphi'|^2\rho^2{\rm d}A_1)\circ\varphi^{-1}$ is a vanishing Carleson measure for $A_1^2(\mathbb{D})$. Similarly, $(|v\psi'|^2\rho^2{\rm d}A_1)\circ\psi^{-1}$ is also a vanishing Carleson measure for $A_1^2(\mathbb{D})$.

On the other hand, using \eqref{equa2.3} and \eqref{equa2.4}, combining \eqref{equa5.13} and \eqref{equa5.11}, we obtain
\begin{align}\label{equa5.14}
\lim_{|a|\to 1}\frac{\int_{\varphi^{-1}(D(a,r))}|u'(z)|^2\rho(z)^4{\rm d}A_1(z)}{1-|a|^2}=0.
\end{align}
Since $C_{u,\varphi}$ is bounded on $H^2(\mathbb{D})$, by \eqref{equa5.5} and \eqref{equa2.3}, we have
\begin{align*}
\sup_{a\in\mathbb{D}}\frac{\int_{\varphi^{-1}(D(a,r))}|u'(z)|^2{\rm d}A_1(z)}{1-|a|^2}<\infty.
\end{align*}
This, together with \eqref{equa5.14}, implies that
\begin{align}\label{equa5.15}
\lim_{|a|\to 1}\frac{\int_{\varphi^{-1}(D(a,r))}|u'(z)|^2\rho(z)^2{\rm d}A_1(z)}{1-|a|^2}=0.
\end{align}
Now using \eqref{equa2.3} and \eqref{equa2.4} again, we combine \eqref{equa5.15}, \eqref{equa5.13} and \eqref{equa5.9} to obtain
\begin{align*}
\lim_{|a|\to 1}\frac{\int_{\varphi^{-1}(D(a,r))}\left|v(z)\psi'(z)\left(\frac{1-\overline{a}\varphi(z)}{1-\overline{a}\psi(z)}\right)^{N+2}-u(z)\varphi'(z)\right|^2{\rm d}A_1(z)}{(1-|a|^2)^3}=0.
\end{align*}
Then modifying the proof of $(i)\Rightarrow(ii)$ in Theorem \ref{theorem1.1} (see \eqref{equa3.2}), we obtain $(|u\varphi'-v\psi'|^2(1-\rho)^{2(N+2)}{\rm d}A_1)\circ\varphi^{-1}$ is a vanishing Carleson measure for $A_1^2(\mathbb{D})$. Similarly, $(|u\varphi'-v\psi'|^2(1-\rho)^{2(N+2)}{\rm d}A_1)\circ\psi^{-1}$ is also a vanishing Carleson measure for $A_1^2(\mathbb{D})$. Finally, according to Theorem \ref{theorem1.1}, $C_{u\varphi',\varphi}-C_{v\psi',\psi}$ is compact on $A_1^2(\mathbb{D})$. The proof is complete.
\end{proof}

\begin{theorem}\label{theorem5.2}
Let $a,b\in \mathbb{C}\backslash\{0\}$ and $\varphi,\psi\in S(\mathbb{D})$ be of bounded valence. Suppose $C_{\varphi}$ and $C_{\psi}$ are not compact on $H^2(\mathbb{D})$. Then $aC_{\varphi}+bC_{\psi}$ is compact on $H^2(\mathbb{D})$ if and only if $a+b=0$ and $C_{\varphi}-C_{\psi}$ is compact on $H^2(\mathbb{D})$.
\end{theorem}

\begin{proof}
The sufficiency is trivial. We only need to prove the necessity.

Assume $aC_{\varphi}+bC_{\psi}$ is compact on $H^2(\mathbb{D})$. Then $aC_{\varphi',\varphi}-bC_{\psi',\psi}$ is compact on $A_1^2(\mathbb{D})$. Let $r>0$ and $\beta>3$. By Theorem \ref{theorem1.1}, we have
\begin{align}\label{equa5.16}
\lim_{|w|\to 1}\frac{\int_{\varphi^{-1}(D(w,r))}\rho(z)^2|\varphi'(z)|^2{\rm d}A_1(z)}{(1-|w|^2)^3}=0,
\end{align}
and 
\begin{align}\label{equa5.17}
\lim_{|w|\to 1}\frac{\int_{\varphi^{-1}(D(w,r))}(1-\rho(z))^{\beta}|a\varphi'(z)+b\psi'(z)|^2{\rm d}A_1(z)}{(1-|w|^2)^3}=0.
\end{align}
For $\delta\in (0,1)$, recall that $E_{\delta}=\{z: \rho(z)<\delta\}$. By the proof of \cite[Theorem 1.3]{4}, if $\varphi$ and $\psi$ are of bounded valence, then
\begin{align}\label{equa5.18}
\lim_{|w|\to 1}\frac{\int_{\varphi^{-1}(D(w,r))\cap E_{\delta}}|\varphi'(z)-\psi'(z)|^2{\rm d}A_1(z)}{(1-|w|^2)^3}=0.
\end{align}
Note that $1-\rho(z)\geq 1-\delta$ for $z\in E_\delta$. If $a+b\neq 0$, then by \eqref{equa5.17} and \eqref{equa5.18}, we get
\begin{align*}
\lim_{|w|\to 1}\frac{\int_{\varphi^{-1}(D(w,r))\cap E_{\delta}}|\varphi'(z)|^2{\rm d}A_1(z)}{(1-|w|^2)^3}=0.
\end{align*}
This, together with \eqref{equa5.16}, implies that
\begin{align*}
\lim_{|w|\to 1}\frac{\int_{\varphi^{-1}(D(w,r))}|\varphi'(z)|^2{\rm d}A_1(z)}{(1-|w|^2)^3}=0.
\end{align*}
Then by Lemma \ref{lemma2.7}, $(|\varphi'|^2{\rm d}A_1)\circ\varphi^{-1}$ is a compact Bergman Carleson measure. It follows that $C_{\varphi',\varphi}$ is compact on $A_1^2(\mathbb{D})$. Then $C_{\varphi}$ is compact on $H^2(\mathbb{D})$, which is a contradiction. So we have $a+b=0$ and $C_{\varphi}-C_{\psi}$ is compact on $H^2(\mathbb{D})$.
\end{proof}


\noindent{\bf Acknowledgment}
Tong was supported in part by the National Natural Science Foundation of China (Grant No. 12171136, 12411530045). Yang was supported in part by the National Natural Science Foundation of China (Grant No. 12501103) and the Natural Science Foundation of Hebei Province (Grant No. A2023202031, A2023202037).\\

\noindent{\bf Declaration}
The authors declare that they have no conflicts of interest. No data was used for the research described in this article.




\end{document}